\newtheorem{thm}{Theorem}[section]
\newtheorem{lem}[thm]{Lemma}
\theoremstyle{definition}
\newtheorem{defn}[thm]{Definition}
\newtheorem{rem}[thm]{Remark}
\newcommand\bN{\mathbb{N}}
\newcommand\bQ{\mathbb{Q}}
\newcommand\bR{\mathbb{R}}
\newcommand\bS{\mathbb{S}}
\newcommand\bZ{\mathbb{Z}}
\newcommand\fR{\mathbb{R}}
\newcommand\cF{\mathcal{F}}
\newcommand\cG{\mathcal{G}}
\newcommand\cI{\mathcal{I}}
\newcommand\cL{\mathcal{L}}
\newcommand\cS{\mathcal{S}}
\newcommand\cM{\mathcal{M}}
\newcommand\cO{\mathcal{O}}
\newcommand\cQ{\mathcal{Q}}
\DeclareMathOperator*{\esssup}{ess\,sup}
\newcommand{\aint}{-\hspace{-0.40cm}\int}
\newcommand{\mysection}[1]{\section{#1}
\setcounter{equation}{0}}
\def\XXint#1#2#3{{\setbox0=\hbox{$#1{#2#3}{\int}$ }
\vcenter{\hbox{$#2#3$ }}\kern-.58\wd0}}
\def\@tocline#1#2#3#4#5#6#7{\relax
  \ifnum #1>\c@tocdepth 
  \else
    \par \addpenalty\@secpenalty\addvspace{#2}%
    \begingroup \hyphenpenalty\@M
    \@ifempty{#4}{%
      \@tempdima\csname r@tocindent\number#1\endcsname\relax
    }{%
      \@tempdima#4\relax
    }%
    \parindent\z@ \leftskip#3\relax \advance\leftskip\@tempdima\relax
    \rightskip\@pnumwidth plus4em \parfillskip-\@pnumwidth
    #5\leavevmode\hskip-\@tempdima
      \ifcase #1
       \or\or \hskip 1em \or \hskip 2em \else \hskip 3em \fi%
      #6\nobreak\relax
    \dotfill\hbox to\@pnumwidth{\@tocpagenum{#7}}\par
    \nobreak
    \endgroup
  \fi}
\begin{document}
\title[A regularity theory for evolution equations in weighted mixed-norm Sobolev-Lipschitz spaces]
{A regularity theory for evolution equations with time-measurable pseudo-differential operators in weighted mixed-norm Sobolev-Lipschitz spaces}
\author[J.-H. Choi]{Jae-Hwan Choi}
\address[J.-H. Choi]{Research Institute of Mathematics, Seoul National University, 1 Gwanak-ro, Gwanak-gu, Seoul 08826, Republic of Korea}
\email{choijh1223@snu.ac.kr}

\subjclass[2020]{Primary: 35K99, 35S10 \quad Secondary: 47G30, 26A16, 35D35}

\keywords{Mixed norm estimates, Cauchy problems, Variable order Lipschitz spaces, Time-measurable pseudo-differential operators}


\maketitle
\begin{abstract}
This paper investigates the existence, uniqueness, and regularity of solutions to evolution equations with time-measurable pseudo-differential operators in weighted mixed-norm Sobolev-Lipschitz spaces.
We also explore trace embedding and continuity of solutions.
\end{abstract}
\tableofcontents

\mysection{Introduction and Main results}
\subsection{Goal and Setting} Consider the following Cauchy problem:
\begin{equation}
\label{24.01.11.10.10}
        \begin{cases}
        \partial_tu(t,x)=\psi(t,-i\nabla)u(t,x)+f(t,x),\quad &(t,x)\in(0,T)\times\bR^d,\\
        u(0,x)=u_0(x)\quad &x\in\bR^d.
    \end{cases}
\end{equation}
Here, $\psi(t, -i\nabla)$ represents a pseudo-differential operator defined by
\begin{align*}
    \psi(t,-i\nabla)u(t,x)&:=\frac{1}{(2\pi)^{d/2}}\int_{\bR^d}\psi(t,\xi)\cF[u(t,\cdot)](\xi)\mathrm{e}^{ix\cdot\xi}\mathrm{d}\xi\\
    &=\frac{1}{(2\pi)^d}\int_{\bR^d}\int_{\bR^d}\psi(t,\xi)u(t,y)\mathrm{e}^{i(x-y)\cdot\xi}\mathrm{d}y\mathrm{d}\xi,
\end{align*}
where $\mathcal{F}$ denotes the Fourier transform with respect to $x$-variables.
The symbol $\psi$ belongs to a class $\mathbb{S}(\gamma,\kappa,M)$, which includes symbols of second-order elliptic operators with time-measurable coefficients:
$$
\psi(t,\xi)=-\sum_{i,j=1}^{d}a^{ij}(t)\xi^i\xi^j\Longleftrightarrow \psi(t,-i\nabla)=\sum_{i,j=1}^{d}a^{ij}(t)D_{x^ix^j},
$$
where $\xi=(\xi^1,\cdots,\xi^d)\in\bR^d$ and $A(t):=[a^{ij}(t)]_{d\times d}$ is a real $d\times d$ matrix satisfying ellipticity condition. 
\begin{defn}
\label{24.01.11.09.20}
    For $M\geq\kappa>0$ and $\gamma>0$, the set $\mathbb{S}(\gamma,\kappa,M)$ is a collection of complex-valued measurable function $\psi(t,\xi)$ satisfying
    $$
    \Re[-\psi(t,\xi)]\geq\kappa|\xi|^{\gamma},\quad \forall (t,\xi)\in\bR\times\bR^d,
    $$
    and
    $$
|D^{\alpha}_{\xi}\psi(t,\xi)|\leq M|\xi|^{\gamma-|\alpha|},\quad \forall (t,\xi)\in\bR\times(\bR^{d}\setminus\{0\}),
$$
for any ($d$-dimensional) multi-index $\alpha$ with $|\alpha| \leq \lfloor d/2\rfloor+1$.
\end{defn}

In this paper, we investigate the existence, uniqueness, and regularity of solutions to \eqref{24.01.11.10.10} in weighted mixed norm Sobolev-Lipschitz space $L_p((0,T),w\,\mathrm{d}t;\Lambda^{\phi}(\bR^d))$:
\begin{align*}
    \|u\|_{L_p((0,T),w\,\mathrm{d}t;\Lambda^{\phi}(\bR^d))}:=\begin{cases}
        \left(\int_0^T\|u(t,\cdot)\|_{\Lambda^{\phi}(\bR^d)}^pw(t)\mathrm{d}t\right)^{1/p},\quad &p\in[1,\infty)\\
    \esssup\limits_{t\in(0,T)}\|u(t,\cdot)\|_{\Lambda^{\phi}(\bR^d)},\quad &p=\infty.
    \end{cases}
\end{align*}
In this context, $\Lambda^{\phi}(\mathbb{R}^d)$ denotes a Lipschitz space of variable order $\phi$.
Additionally, the weight function $w$ belongs to the Muckenhoupt $A_p(\mathbb{R})$ class.

\begin{defn}[Lipschitz space of variable order $\phi$]
\label{24.01.14.12.34}

$(i)$ For a function $\phi:\bR_+\rightarrow \bR_+$, we define 
$$
s_\phi(\lambda) := \sup_{t>0}\frac{\phi(\lambda t)}{\phi(t)}\,,
$$
such that $s_{\phi}:\mathbb{R}+\rightarrow (0,\infty]$.
For $a, b \in \mathbb{R}$,
\begin{align}\label{ftn class}
\mathcal{I}(a, b) := \{\phi : \text{$s_\phi(\lambda) = O(\lambda^a)$ as $\lambda \downarrow 0$, and $s_\phi(\lambda) = O(\lambda^b)$ as $\lambda \uparrow \infty$} \}.
\end{align}
We also define $\mathcal{I}_o(a,b)$, where $o$ indicates the replacement of the big $O$ notation with little $o$ notation in \eqref{ftn class}.

$(ii)$ Let $p\in[1,\infty]$, $L>0$, and $\phi\in\cI_o(0,L)$.
    The space $\Lambda_p^{\phi}(\bR^d)$ is defined by
    $$
    \Lambda_p^{\phi}(\bR^d):=\{f\in L_{\infty}(\bR^d):\|f\|_{\Lambda_p^{\phi}(\bR^d)}<\infty\},
    $$
    where the norm $\|f\|_{\Lambda_p^{\phi}(\bR^d)}$ is given by
    \begin{eqnarray*}
    \|f\|_{\Lambda_p^{\phi}(\bR^d)}:=\|f\|_{L_{\infty}(\bR^d)}+
    \begin{cases}
        \left(\int_{\bR^d}\phi(|h|^{-1})^p\|\mathcal{D}_{h}^{L_0}f\|_{L_{\infty}(\bR^d)}^p\frac{\mathrm{d}h}{|h|^d}\right)^{1/p},\quad &p\in[1,\infty),\\
        \sup\limits_{h\in\bR^d}\phi(|h|^{-1})\|\mathcal{D}_{h}^{L_0}f\|_{L_{\infty}(\bR^d)},\quad &p=\infty.
        \end{cases}
    \end{eqnarray*}
Here $\mathcal{D}_hf(x):=f(x+h)-f(x)$, $\mathcal{D}_h^nf(x)=\mathcal{D}_h^{n-1}(\mathcal{D}_hf)(x)$ and $L_0$ is the smallest integer which is greater than or equal
to $L$.
We denote $\Lambda_{\infty}^{\phi}(\bR^d)=\Lambda^{\phi}(\bR^d)$.
\end{defn}

\begin{rem}
\label{24.01.14.12.36}
It can be easily checked that $\Lambda_{p}^{\phi}(\bR^d)$ encompasses non-separable Banach spaces, such as the classical H\"older space.
For example, if we choose $\phi(\lambda) := \lambda^{\alpha}$ (for $\alpha \in (0,1)$), belonging to $\mathcal{I}_o(0,1)$, then $L_0$ is set to 1.
Under these conditions, $\Lambda^{\phi}(\mathbb{R}^d)$ is equivalent to the H\"older space of order $\alpha$, denoted as $C^{\alpha}(\mathbb{R}^d)$.
Additional properties and characteristics of the function classes $\mathcal{I}(a,b)$ and $\mathcal{I}_o(a,b)$ are detailed in \cite{CLSW2023trace,GP1977}.
\end{rem}

\begin{defn}[Muckenhoupt's class]
\label{24.01.11.09.18}
For $p\in(1,\infty)$, let $A_p=A_p(\bR^d)$ be the class of all nonnegative and locally integrable functions $w$ satisfying
\begin{align}
\label{def ap}
[w]_{A_p}:=\sup_{\cQ\,\text{cubes in }\bR^d}\left(\aint_{\cQ}w(x)\mathrm{d}x\right)\left(\aint_{\cQ}w(x)^{-1/(p-1)}\mathrm{d}x\right)^{p-1}<\infty.
\end{align}
The class of $A_1=A_1(\bR^d)$ is defined as a collection of nonnegative and locally integrable functions $w$ satisfying
$$
\cM w(x)\leq Cw(x),\quad \forall x\in\bR^d,
$$
where $\cM$ is the Hardy-Littlewood maximal operator and $C$ is independent of $x$.
The class $A_{\infty}=A_{\infty}(\bR^d)$ could be defined as the union of $A_p(\bR^d)$ for all $p\in[1,\infty)$, \textit{i.e.}
    $$
    A_\infty(\bR^d)=\bigcup_{p\in[1,\infty)}A_p(\bR^d).
    $$
\end{defn}

\subsection{Background and Main results}
Firstly, we revisit some historical results on the regularity estimates of second-order parabolic partial differential equations (PDEs) within the mixed norm space $L_p(\mathbb{R};C^{2+\alpha}(\mathbb{R}^d))$.
The equation of interest is as follows:
\begin{equation}
\label{24.01.12.09.24}
    \partial_tu(t,x)=\sum_{i,j=1}^da^{ij}(t)D_{x^ix^j}u(t,x)+f(t,x),\quad (t,x)\in\bR\times\bR^d.
\end{equation}

The case for $p = \infty$, known as the \emph{partial Schauder estimates}, gained prominence following Brandt's work \cite{Brandt1969}.
In \cite{Brandt1969}, Brandt demonstrated that, unlike the \emph{classical Schauder estimates} (refer to \textit{e.g.}, \cite{krylov1996lectures,Lieberman1996}), the partial Schauder estimates are valid even if $a^{ij}$ in \eqref{24.01.12.09.24} are merely measurable.
Additional references and works on partial Schauder estimates can be found in \cite{Dong_Kim2011,Dong_Kim2019,Knerr1980,Krylov_2010,Lieberman1992,Lorenzi2000}.

To the best of author's knowledge, the case for $p \neq \infty$ was initially introduced by Krylov in \cite{Krylov_2002,Krylov_2003}. The primary concepts in Krylov's studies are summarized as follows:
\begin{enumerate}
\item The main idea of \cite{Krylov_2002} was the development of a systematic approach, as laid out in \cite{Krylov_2002_1}, using the vector-valued Calder\'on-Zygmund theorem. For further generalizations of \cite{Krylov_2002_1}, refer to \cite{Dong_Kim2015,Dong_Kim2018,Krylov_2002_1,Krylov_2021,PST2017}.
\item The focus of \cite{Krylov_2003} was refining Brandt's estimates (from \cite{Brandt1969}) using simple heat potential estimates and well-established properties of the Hardy-Littlewood maximal function.
\end{enumerate}
These approaches led to significant generalizations, as seen in \cite{Kim2018,ST2021}.
Stinga and Torrea (\cite{ST2021}) expanded upon Krylov's methodology in \cite{Krylov_2002}, proving regularity estimates of solutions to \eqref{24.01.12.09.24} in $L_p(\mathbb{R},w\,\mathrm{d}t;C^{2+\alpha}(\mathbb{R}^d))$, where $w$ is a weight function in the Muckenhoupt $A_p(\mathbb{R})$ class.
Kim (\cite{Kim2018}) proposed regularity estimates of solutions to \eqref{24.01.11.10.10} with zero initial data in $L_p((0,T);\Lambda^{\gamma+\alpha}(\mathbb{R}^d))$, where $\Lambda^{\gamma+\alpha}(\mathbb{R}^d)$ is a Lipschitz space of order $\gamma+\alpha$ (see Definition \ref{24.01.14.12.34} and Remark \ref{24.01.14.12.36}).
Kim's main idea in \cite{Kim2018} involved an interpolation between two estimates: a partial Schauder estimate and a weak $L_1$ estimate.
For these estimates, refined calculations of the fundamental solution (see Definition \ref{24.01.14.12.42}) for \eqref{24.01.11.10.10} were necessary.

Despite the advancements in the study of evolution equations within mixed norm spaces such as $L_p(\mathbb{R};C^{2+\alpha}(\mathbb{R}^d))$ and $L_p((0,T);\Lambda^{\gamma+\alpha}(\mathbb{R}^d))$, as evidenced by the aforementioned results, certain fundamental and intriguing questions remain unaddressed.
These questions include:
\begin{enumerate}
    \item \textbf{Weighted Regularity and Variable Order:} Apart from Stinga and Torrea's results, all previous studies have focused on non-weighted regularity results in spaces of constant order, such as $C^{2+\alpha}(\mathbb{R}^d)$.
    This raises the question: \emph{Is it feasible to achieve regularity estimates for \eqref{24.01.11.10.10} within the weighted mixed norm space $L_p((0,T),w\,\mathrm{d}t;\Lambda^{\phi}(\mathbb{R}^d))$?}
    \item \textbf{Initial Data and Trace in $L_p((0,T),w\,\mathrm{d}t;\Lambda^{\phi}(\mathbb{R}^d))$:} None of the studies mentioned earlier, such as \cite{Kim2018,Krylov_2002,Krylov_2003,ST2021}, consider the initial data.
    This leads to the inquiry: \emph{Are the findings of \cite{Kim2018,Krylov_2002,Krylov_2003,ST2021} still applicable in $L_p((0,T),w\,\mathrm{d}t;\Lambda^{\phi}(\mathbb{R}^d))$ when initial data is factored in? If so, what is the optimal initial data space?}
    \item \textbf{Continuity of the Solution:} When a solution is uniquely determined in our target space $L_p((0,T),w\,\mathrm{d}t;\Lambda^{\phi}(\mathbb{R}^d))$, \emph{Can we ascertain H\"older or Zygmund type regularity for these solutions, such as in $C^{\alpha,\beta}_{t,x}$?}
    \item \textbf{General operators and Strong Solution:} Excluding Kim's research \cite{Kim2018}, the predominant focus has been on second-order parabolic PDEs, rendering the discussion about the type of solution (mild, weak, or strong) less pertinent. However, Kim's study \cite{Kim2018}, due to the non-local nature of the main operator $\psi(t, -i\nabla)$, addressed only the regularity estimates of the \emph{weak} solution. This raises a crucial question: \emph{Is it possible to discuss a strong solution for \eqref{24.01.11.10.10}?}
    \end{enumerate}

This paper is dedicated to addressing the questions previously outlined.
To achieve this, we have developed a unified approach that synergizes the methodologies of Krylov, as seen in \cite{Krylov_2003}, with those of Kim from \cite{Kim2018}.
Krylov's contribution, detailed in \cite{Krylov_2003}, involves proving the following point-wise estimates using straightforward heat potential calculations,
\begin{equation}
\label{24.01.15.17.42}
[D^2_xu(t,\cdot)]_{C^{\alpha}(\bR^d)}\leq N\sup_{r>0}\frac{1}{2r}\int_{t-r}^{t+r}\left[\partial_su(s,\cdot)-\sum_{i,j=1}^da^{ij}(s)D_{x^ix^j}u(s,\cdot)\right]_{C^{\alpha}(\bR^d)}\mathrm{d}s,
\end{equation}
where $[f(t,\cdot)]_{C^{\alpha}(\bR^d)}:=\sup_{x\neq y}\frac{|f(t,x)-f(t,y)|}{|x-y|^{\alpha}}$.
In \eqref{24.01.15.17.42}, the right-hand side represents the Hardy-Littlewood maximal operator:
$$
\mathcal{M}f(t):=\sup_{r>0}\frac{1}{2r}\int_{t-r}^{t+r}|f(s)|\mathrm{d}s.
$$
Given that $\mathcal{M}:L_p(\mathbb{R}) \to L_p(\mathbb{R})$ is bounded for $p \in (1,\infty)$ (see, for example, \cite[Theorem 2.1.6]{grafakos2014classical}), Krylov's a priori estimates are thus obtained.
In \cite{Kim2018}, Kim proved the partial Schauder estimates and weak $L_1$ estimates by leveraging the fundamental solution and the Littlewood-Paley characterization of the Lipschitz space. The following estimates were established:
\begin{equation*}
    \begin{gathered}
        \|u\|_{L_{\infty}((0,T);\Lambda^{\gamma+\alpha}(\bR^d))}\leq N\|\partial_tu-\psi(\cdot,-i\nabla)u\|_{L_{\infty}((0,T);\Lambda^{\alpha}(\bR^d))},\\
        \|u\|_{L_{1,\infty}((0,T);\Lambda^{\gamma+\alpha}(\bR^d))}\leq N\|\partial_tu-\psi(\cdot,-i\nabla)u\|_{L_{1}((0,T);\Lambda^{\alpha}(\bR^d))}.
    \end{gathered}
\end{equation*}
The Marcinkiewicz interpolation theorem gives Kim's a priori estimates.
Inspired by these results, our strategy is twofold:
\begin{itemize}
    \item Deriving estimates on the fundamental solution (Theorem \ref{FSetimate}).
    \item Developing point-wise estimates in the vein of \eqref{24.01.15.17.42}, utilizing the Littlewood-Paley characterization (Lemma \ref{23.12.05.14.49}).
\end{itemize}
For addressing the initial data and trace, we primarily adopt the approach delineated in \cite{Choi_Kim_Lee2023,CLSW2023trace}, which elaborates on results concerning the initial data and trace in the space $L_p(\mathbb{R},w_1\,\mathrm{d}t;L_q(\mathbb{R}^d,w_2\,\mathrm{d}x))$.

Here are the main results of this paper.
The proofs will be given in Section \ref{24.02.03.18.23} and \ref{24.02.03.18.24}.
\begin{defn}[Solution space]
For $\gamma\in(0,\infty)$, $L>0$, $\phi\in\cI_o(0,L)$, $p\in[1,\infty]$ and $w\in A_p(\bR)$, denote
$$
\phi_{\gamma}(\lambda):=\phi(\lambda)\lambda^{\gamma},\quad \phi_{\gamma,p,w}(\lambda):=\phi(\lambda)\lambda^{\gamma}\left(\int_0^{\lambda^{-\gamma}}w(s)\mathrm{d}s\right)^{1/p}.
$$
    The space $\mathbf{H}_{p,w}^{\phi,{\gamma}}((0,T)\times\bR^d)$ is a collection of $u\in L_p((0,T),w\,\mathrm{d}t;\Lambda^{\phi_{\gamma}}(\bR^d))$ which satisfies that there exist $u_0\in \Lambda_p^{\phi_{\gamma,p,w}}(\bR^d)$ and $f\in L_p((0,T),w\,\mathrm{d}t;\Lambda^{\phi}(\bR^d))$ such that
    $$
    u(t,x)=u_0(x)+\int_0^tf(s,x)\mathrm{d}s
    $$
    for all $(t,x)\in [0,T)\times\bR^d$.
    We also define
    $$
    \|u\|_{\mathbf{H}_{p,w}^{\phi,{\gamma}}((0,T)\times\bR^d)}:=\|u\|_{L_p((0,T),w\,\mathrm{d}t;\Lambda^{\phi_{\gamma}}(\bR^d))}+\|u_0\|_{\Lambda_p^{\phi_{\gamma,p,w}}(\bR^d)}+\|f\|_{L_p((0,T),w\,\mathrm{d}t;\Lambda^{\phi}(\bR^d))}.
    $$
\end{defn}
\begin{thm}[Existence, Uniqueness, and Regularity of solutions]
\label{23.12.05.09.53}
    Let $p\in(1,\infty]$, $w\in A_p(\bR)$, $\psi\in\mathbb{S}(\gamma,\kappa,M)$ and $\phi\in\cI_o(0,L)$.
   Then for $u_0\in \Lambda_{p}^{\phi_{\gamma,p,w}}(\bR^d)$ and $f\in L_{p}((0,T),w\,\mathrm{d}t;\Lambda^{\phi}(\bR^d))$, there exists a unique strong solution $u\in\mathbf{H}_{p,w}^{\phi,{\gamma}}((0,T)\times\bR^d)$ to
    \begin{equation}
    \label{23.11.27.20.31}
        \begin{cases}
        \partial_tu(t,x)=\psi(t,-i\nabla)u(t,x)+f(t,x),\quad (t,x)\in(0,T)\times\bR^d,\\
        u(0,x)=u_0(x),
    \end{cases}
    \end{equation}
    with the estimate
    \begin{align}
        \label{23.12.05.12.28}
        \|u\|_{\mathbf{H}_{p,w}^{\phi,{\gamma}}((0,T)\times\bR^d)}\leq N\left(\|u_0\|_{{\Lambda_p^{\phi_{\gamma,p,w}}}(\bR^d)}+\|f\|_{L_{p}((0,T),w\,\mathrm{d}t;\Lambda^{\phi}(\bR^d))}\right),
    \end{align}
    where $N=N(d,\delta,\gamma,\kappa,M,p,T,[w]_{A_p(\bR)})$.
\end{thm}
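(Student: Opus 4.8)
The plan is to prove the three assertions — the estimate \eqref{23.12.05.12.28}, existence, and uniqueness — in that order, since the a priori estimate does double duty: it is the estimate, and, applied to a difference of solutions, it immediately yields uniqueness; existence is then obtained from an explicit mild-solution formula whose required properties come from Theorem~\ref{FSetimate}. Throughout I fix a Littlewood--Paley partition $\{\Delta_j\}_{j\ge -1}$ on $\bR^d$ and use freely that $\Delta_j$ and $\psi(t,-i\nabla)$, being Fourier multipliers in $x$, commute, and that $\Lambda_p^{\phi_{\gamma,p,w}}(\bR^d)\subset L_\infty(\bR^d)$ and $\Lambda^{\phi}(\bR^d)\subset L_\infty(\bR^d)$ by Definition~\ref{24.01.14.12.34}, so that every convolution below is meaningful. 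I also record the two auxiliary facts that will be invoked: the uniform-in-$t$ mapping property $\psi(t,-i\nabla)\colon\Lambda^{\phi_\gamma}(\bR^d)\to\Lambda^{\phi}(\bR^d)$, a Mikhlin-type consequence of the bounds $|D^\alpha_\xi\psi|\le M|\xi|^{\gamma-|\alpha|}$ for $|\alpha|\le\lfloor d/2\rfloor+1$ in Definition~\ref{24.01.11.09.20}; and the boundedness of the one-dimensional Hardy--Littlewood maximal operator $\cM_t$ on $L_p(\bR,w\,\mathrm{d}t)$ for $w\in A_p(\bR)$, $p\in(1,\infty)$ (for $p=\infty$ the weight plays no role and $\cM_t$ is trivially bounded on $L_\infty$).

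\textbf{A priori estimate.} Let $u\in\mathbf{H}_{p,w}^{\phi,{\gamma}}((0,T)\times\bR^d)$ be a strong solution with data $(u_0,f)$, so $\partial_t u=\psi(t,-i\nabla)u+f$ for a.e.\ $t$ and $u=u_0+\int_0^t(\psi(s,-i\nabla)u+f)\,\mathrm{d}s$; since $\psi(\cdot,-i\nabla)u+f\in L_1((0,T);L_\infty(\bR^d))$ (by H\"older and local integrability of $w^{-1/(p-1)}$), $u$ is absolutely continuous into $L_\infty$. Applying $\Delta_j$ shows that $u_j:=\Delta_j u$ solves, in the Banach space of $L_\infty(\bR^d)$ functions with spectrum in $\{|\xi|\sim 2^j\}$, the linear ODE $\partial_t u_j=\psi(t,-i\nabla)u_j+\Delta_j f$ with $u_j(0)=\Delta_j u_0$; since the fundamental solution of $\partial_t-\psi(t,-i\nabla)$ is $p(s,t,\cdot)=\cF^{-1}[\exp(\int_s^t\psi(r,\cdot)\,\mathrm{d}r)]$, Carath\'eodory uniqueness forces the Duhamel representation
\begin{align*}
u_j(t,\cdot)=p(0,t,\cdot)*\Delta_j u_0+\int_0^t p(s,t,\cdot)*\Delta_j f(s,\cdot)\,\mathrm{d}s .
\end{align*}
I would now insert the spectrally localized bounds of Theorem~\ref{FSetimate} — in particular $\|p(s,t,\cdot)*g\|_{L_\infty}\le N\mathrm{e}^{-c(t-s)2^{j\gamma}}\|g\|_{L_\infty}$ for $g$ with spectrum in $\{|\xi|\sim 2^j\}$ — combine them with $\phi(2^j)\|\Delta_j f(s,\cdot)\|_{L_\infty}\le N\|f(s,\cdot)\|_{\Lambda^{\phi}}$ and with the elementary fact that the nonnegative decreasing kernel $r\mapsto 2^{j\gamma}\mathrm{e}^{-cr2^{j\gamma}}$ has $L_1$-norm $O(1)$ uniformly in $j$, so that its one-sided convolution is dominated by a fixed multiple of $\cM_t$; taking the supremum over $j$ and using the Littlewood--Paley characterization of $\Lambda^{\phi_\gamma}$ (Lemma~\ref{23.12.05.14.49}), this produces the Krylov-type pointwise bound
\begin{align*}
\|u(t,\cdot)\|_{\Lambda^{\phi_\gamma}(\bR^d)}\le N\sup_{j\ge -1}\Big(\phi_\gamma(2^j)\,\mathrm{e}^{-ct2^{j\gamma}}\,\|\Delta_j u_0\|_{L_\infty(\bR^d)}\Big)+N\,\cM_t\big(\|f(\cdot,\cdot)\|_{\Lambda^{\phi}(\bR^d)}\big)(t).
\end{align*}
Taking $L_p((0,T),w\,\mathrm{d}t)$ norms: the $f$-term is $\le N\|f\|_{L_p((0,T),w\,\mathrm{d}t;\Lambda^{\phi})}$ by maximal-function boundedness, and the $u_0$-term is, by the definition of $\phi_{\gamma,p,w}$ and the weighted-trace characterization of $\Lambda_p^{\phi_{\gamma,p,w}}(\bR^d)$ carried out as in \cite{Choi_Kim_Lee2023,CLSW2023trace} — where the factor $(\int_0^{\lambda^{-\gamma}}w)^{1/p}$ in the order function is exactly what encodes integrating the decay $\mathrm{e}^{-ct2^{j\gamma}}$ against $w\,\mathrm{d}t$ — bounded by $N\|u_0\|_{\Lambda_p^{\phi_{\gamma,p,w}}(\bR^d)}$. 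This is \eqref{23.12.05.12.28}; applying it to $u^{(1)}-u^{(2)}$ for two strong solutions with the same data (a strong solution in $\mathbf{H}_{p,w}^{\phi,\gamma}$ with zero data) gives uniqueness.

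\textbf{Existence.} Given $u_0$ and $f$ as in the statement, define $u(t,\cdot):=p(0,t,\cdot)*u_0+\int_0^t p(s,t,\cdot)*f(s,\cdot)\,\mathrm{d}s$. Summing the estimates of Theorem~\ref{FSetimate} over the Littlewood--Paley blocks, together with the weighted-trace bound for the $u_0$-part, shows $u\in L_p((0,T),w\,\mathrm{d}t;\Lambda^{\phi_\gamma}(\bR^d))$ and, via the mapping property $\psi(t,-i\nabla)\colon\Lambda^{\phi_\gamma}\to\Lambda^{\phi}$, that $\psi(\cdot,-i\nabla)u+f\in L_p((0,T),w\,\mathrm{d}t;\Lambda^{\phi}(\bR^d))$. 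Differentiating the Duhamel formula (the diagonal term producing the summand $f(t,\cdot)$) gives $\partial_t u=\psi(t,-i\nabla)u+f$ for a.e.\ $t$, and $p(0,t,\cdot)*u_0\to u_0$ as $t\downarrow 0$ identifies the initial trace, so that $u=u_0+\int_0^t(\psi(s,-i\nabla)u+f)\,\mathrm{d}s$ and $u$ is a strong solution in $\mathbf{H}_{p,w}^{\phi,{\gamma}}((0,T)\times\bR^d)$; the bound \eqref{23.12.05.12.28} for it is the one already proved.

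\textbf{Main obstacle.} The crux is the a priori estimate, and within it two points carry the real weight. First, upgrading Theorem~\ref{FSetimate} to the Krylov-type pointwise inequality whose right side is genuinely a Hardy--Littlewood maximal function in $t$: this requires the spectrally localized decay $\mathrm{e}^{-c(t-s)2^{j\gamma}}$ to interact cleanly with the variable order $\phi\in\cI_o(0,L)$ and to survive the summation over Littlewood--Paley blocks. Second, matching the initial-data contribution with $\Lambda_p^{\phi_{\gamma,p,w}}(\bR^d)$, where the delicate interplay among the $A_p$ condition on $w$, the quantity $\int_0^{\lambda^{-\gamma}}w$, and the exponential decay must be routed through the trace machinery of \cite{Choi_Kim_Lee2023,CLSW2023trace}. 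For $p=\infty$ the statement collapses to a variable-order partial Schauder estimate in the spirit of \cite{Brandt1969,Kim2018}, and the weight and trace issues disappear, so the genuinely new difficulty is concentrated in the weighted $p<\infty$ regime.
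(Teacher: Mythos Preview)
Your proposal is essentially correct and rests on the same ingredients the paper uses: the spectrally localized kernel bounds of Theorem~\ref{FSetimate}, the maximal-function domination of Lemma~\ref{23.12.05.14.49}, the mapping property of Theorem~\ref{23.11.14.14.54}, and the initial-data computation carried out in the proof of Theorem~\ref{23.11.26.16.09}-$(iii)$ (your ``weighted-trace characterization'' is exactly \eqref{23.12.05.14.58}). The difference is organizational. The paper does \emph{not} first derive an a~priori estimate for an arbitrary strong solution; it proves uniqueness directly by the Gr\"onwall argument on $\|\Delta_j u(t,\cdot)\|_{L_\infty}$ (this is precisely your ``Carath\'eodory uniqueness'' step, so your route through the estimate to uniqueness is redundant---the Duhamel identification you perform \emph{is} the uniqueness proof), and then it shows the mild solution is strong and bounds it via Theorem~\ref{23.11.26.16.09}. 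Your packaging folds the same Gr\"onwall step into an a~priori bound, which is fine but buys nothing extra.

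The one place where your sketch is looser than the paper is existence. You write ``differentiating the Duhamel formula gives $\partial_t u=\psi(t,-i\nabla)u+f$'', but justifying the diagonal contribution and the interchange of $\psi(t,-i\nabla)$ with $\int_0^t$ is delicate because $p(s,t,\cdot)$ is singular at $s=t$. The paper avoids this by going the other way: it computes $\int_0^t\psi(s,-i\nabla)u(s,x)\,\mathrm{d}s$ using $\psi(s,-i\nabla)p(s,r,\cdot)=\partial_s p(s,r,\cdot)$, Fubini, and the fundamental theorem of calculus, reducing matters to the pointwise limit $\lim_{t\downarrow r}\int p(t,r,x-y)h(y)\,\mathrm{d}y=h(x)$ for $h\in\Lambda^{\phi}$, which it then proves carefully via Theorem~\ref{FSetimate} (this is your ``$p(0,t,\cdot)*u_0\to u_0$'' step, but done for all $h\in\Lambda^{\phi}$ and with explicit tail control). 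If you keep your organization, you should replace the formal differentiation by this integrated argument.
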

\begin{rem}
Theorem \ref{23.12.05.09.53} partially holds for the case $p=1$:
\begin{enumerate}[(i)]
    \item If $f\equiv 0$, then Theorem \ref{23.12.05.09.53} is valid for the case $p=1$.
    \item Theorem \ref{23.12.05.09.53} holds when the solution $u$ is in the weak space $L_{1,\infty}((0,T),w\,\mathrm{d}t;\Lambda^{\phi_{\gamma}}(\mathbb{R}^d))$, rather than in $L_{1}((0,T),w\,\mathrm{d}t;\Lambda^{\phi_{\gamma}}(\mathbb{R}^d))$.
    Here, the norm of $L_{1,\infty}((0,T),w\,\mathrm{d}t;\Lambda^{\phi_{\gamma}}(\mathbb{R}^d))$ is defined as
    $$
    \|u\|_{L_{1,\infty}((0,T),w\,\mathrm{d}t;\Lambda^{\phi_{\gamma}}(\mathbb{R}^d))}:=\sup_{\lambda>0}\lambda w(\{t\in(0,T):\|u(t,\cdot)\|_{\Lambda^{\phi_{\gamma}}(\mathbb{R}^d)}\geq\lambda\}),
    $$
    where $w(A):=\int_{\bR}1_{A}(t)w(t)\mathrm{d}t$.
\end{enumerate}
Corresponding estimates for cases (i) and (ii) will be detailed in Theorem \ref{23.11.26.16.09}.
\end{rem}

\begin{thm}[Embedding theorems]
\label{24.02.03.17.37}
Let $p\in(1,\infty]$, $w\in A_p(\bR)$, $\phi\in\cI_o(0,L)$ and $u\in\mathbf{H}_{p,w}^{\phi,{\gamma}}((0,T)\times\bR^d)$.
\begin{enumerate}[(i)]
    \item \textbf{(Trace embedding)} Then there exists a constant $N=N(p,[w]_{A_p(\bR)})$ such that,
    \begin{equation}
    \label{24.01.15.23.11}
    \sup_{0\leq t\leq T}\|u(t,\cdot)\|_{\Lambda^{\phi_{\gamma,p,w}}(\bR^d)}\leq \sup_{0\leq t\leq T}\|u(t,\cdot)\|_{\Lambda_p^{\phi_{\gamma,p,w}}(\bR^d)}\leq N\|u\|_{\mathbf{H}_{p,w}^{\phi,{\gamma}}((0,T)\times\bR^d)}.
    \end{equation}
    \item \textbf{(Continuity of solutions)} Then there exists a constant $N=N(L_0,p,[w]_{A_p(\bR)})$ such that,
    $$
    \|u\|_{C([0,T]\times\bR^d)}+\sup_{(s,h_1,h_2)\in D_T}\frac{\|\mathcal{D}_{(h_1/L_0,h_2)}^{L_0}u(s,\cdot)\|_{C(\bR^d)}}{\widetilde{W}(s+h_1,s)^{1-\frac{1}{p}}+\phi_{\gamma,p,w}(|h_2|^{-1})^{-1}}\leq N\|u\|_{\mathbf{H}_{p,w}^{\phi,{\gamma}}((0,T)\times\bR^d)},
    $$
    where $\widetilde{W}(t,s)^{1-\frac{1}{p}}:=\left(\int_s^tw^{-\frac{1}{p-1}}(r)\mathrm{d}r\right)^{1-\frac{1}{p}}$, $\mathcal{D}_{(h_1,h_2)}u(t,x)(=\mathcal{D}_{(h_1,h_2)}^{1}u(t,x)):=u(t+h_1,x+h_2)-u(t,x)$, $\mathcal{D}_{(h_1,h_2)}^{n+1}u(t,x):=\mathcal{D}_{(h_1,h_2)}^{n}(\mathcal{D}_{(h_1,h_2)}u)(t,x)$ and $D_T:=\{(s,h_1,h_2):0\leq s\leq s+h_1\leq T, h_2\in\bR^d\}$.
\end{enumerate}
\end{thm}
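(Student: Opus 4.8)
\emph{Strategy.} My plan is to deduce both assertions from the representation $u(t,x)=u_0(x)+\int_0^t f(s,x)\dd s$ that is built into $\mathbf{H}_{p,w}^{\phi,\gamma}$, together with $u\in L_p((0,T),w\dd t;\Lambda^{\phi_\gamma}(\bR^d))$ and $f\in L_p((0,T),w\dd t;\Lambda^{\phi}(\bR^d))$ --- so that $u$ carries $\gamma$ spatial derivatives and one weighted time derivative, and \eqref{24.01.15.23.11} becomes a \emph{trace} statement while part (ii) becomes a \emph{space-time modulus of continuity} statement. The engine will be the $A_p$-weighted one-dimensional trace/Hardy estimates of \cite{Choi_Kim_Lee2023,CLSW2023trace}, which I would apply either frequency-by-frequency after the Littlewood--Paley characterization of the $\Lambda^{\psi}$-scale (replacing $\Lambda^{\psi}(\bR^d)$ by the sequence space built from the blocks $\psi(2^j)\|\Delta_j\cdot\|_{L_\infty}$, the $L_\infty$-valued setting treated there) or directly through the $K$-functional decomposition below.

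\emph{Part (i).} First I would dispose of the first inequality in \eqref{24.01.15.23.11} --- it is the embedding $\Lambda_p^{\psi}(\bR^d)\hookrightarrow\Lambda^{\psi}(\bR^d)$, an $\ell_p\hookrightarrow\ell_\infty$ statement in the Littlewood--Paley picture --- so that it remains to bound $\|u(t_0,\cdot)\|_{\Lambda_p^{\phi_{\gamma,p,w}}(\bR^d)}$ uniformly in $t_0\in[0,T]$. The $L_\infty$-part is immediate: $\|u(t_0,\cdot)\|_{L_\infty}\le\|u_0\|_{L_\infty}+\int_0^{t_0}\|f(s,\cdot)\|_{L_\infty}\dd s$, and the integral is handled by H\"older's inequality against $(w,w^{-1/(p-1)})$, using $w^{-1/(p-1)}\in L_{1,\mathrm{loc}}$ for $w\in A_p(\bR)$. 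For the seminorm, I would fix $t_0$ and $h\in\bR^d$, set $\rho:=|h|^{\gamma}$, choose an interval $I\subseteq[0,T]$ with $|I|$ comparable to $\min(\rho,T)$ and $t_0\in\overline{I}$, write $P_I g(x):=|I|^{-1}\int_I g(s,x)\dd s$, and start from
\begin{equation*}
\mathcal{D}_h^{L_0}u(t_0,x)=\mathcal{D}_h^{L_0}(P_Iu)(x)+\frac{1}{|I|}\int_I\int_{s\wedge t_0}^{s\vee t_0}\mathcal{D}_h^{L_0}f(r,x)\dd r\dd s .
\end{equation*}
The first term is estimated through the $\Lambda^{\phi_\gamma}$-norm of $u(s,\cdot)$ and $\|\mathcal{D}_h^{L_0}g\|_{L_\infty}\le\phi_\gamma(|h|^{-1})^{-1}\|g\|_{\Lambda^{\phi_\gamma}}$, the second through the $\Lambda^{\phi}$-norm of $f(r,\cdot)$ and $\|\mathcal{D}_h^{L_0}g\|_{L_\infty}\le\min\big(2^{L_0},\phi(|h|^{-1})^{-1}\big)\|g\|_{\Lambda^{\phi}}$; in the regime $\rho>t_0$ the term $\mathcal{D}_h^{L_0}u_0$ is kept separately, and it integrates to $\|u_0\|_{\Lambda_p^{\phi_{\gamma,p,w}}}$ because the same function $\phi_{\gamma,p,w}$ governs it. The point is that, using the identities
\begin{equation*}
\frac{\phi_{\gamma,p,w}(|h|^{-1})}{\phi_\gamma(|h|^{-1})}=\Big(\int_0^{\rho}w\Big)^{1/p},\qquad \frac{\phi_{\gamma,p,w}(|h|^{-1})}{\phi(|h|^{-1})}=\frac{1}{\rho}\Big(\int_0^{\rho}w\Big)^{1/p},
\end{equation*}
H\"older in time, the $A_p(\bR)$ inequality $\big(\int_J w\big)^{1/p}\big(\int_J w^{-1/(p-1)}\big)^{1-1/p}\le[w]_{A_p}^{1/p}|J|$, and the substitution $\rho=|h|^{\gamma}$ (which converts $\int_{\bR^d}(\cdots)\dd h/|h|^d$ into a constant times $\int_0^\infty(\cdots)\dd\rho/\rho$), all the factors $\phi,\phi_\gamma,\phi_{\gamma,p,w}$ cancel and one is left with precisely the weighted one-dimensional maximal/Hardy inequality of \cite{Choi_Kim_Lee2023,CLSW2023trace}; summing the regimes $\rho\le t_0$ and $\rho>t_0$ then yields \eqref{24.01.15.23.11} with $N=N(p,[w]_{A_p(\bR)})$.

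\emph{Part (ii).} Boundedness of $u$ is immediate from part (i) (since $\|u\|_{C([0,T]\times\bR^d)}=\sup_t\|u(t,\cdot)\|_{L_\infty}\le\sup_t\|u(t,\cdot)\|_{\Lambda_p^{\phi_{\gamma,p,w}}}$), and I would get $u\in C([0,T]\times\bR^d)$ from the facts that each $u(t,\cdot)$ is continuous with a modulus of continuity uniform in $t$ (part (i) and $\Lambda^{\phi_{\gamma,p,w}}(\bR^d)\hookrightarrow C(\bR^d)$, $\phi_{\gamma,p,w}$ having positive order) and that $t\mapsto u(t,\cdot)\in C(\bR^d)$ is continuous because $\|u(t,\cdot)-u(t',\cdot)\|_{L_\infty}\le\int_{t\wedge t'}^{t\vee t'}\|f(s,\cdot)\|_{L_\infty}\dd s\to0$. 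For the mixed modulus, fix $(s,h_1,h_2)\in D_T$, put $a:=h_1/L_0$, write $\mathcal{D}_{(a,h_2)}=S_{h_2}\mathcal{D}_{(a,0)}+\mathcal{D}_{(0,h_2)}$ with $S_{h_2}$ the spatial shift --- the three operators commuting pairwise --- and expand the $L_0$-th power binomially:
\begin{equation*}
\mathcal{D}_{(a,h_2)}^{L_0}u=\sum_{k=0}^{L_0}\binom{L_0}{k}S_{h_2}^{k}\,\mathcal{D}_{(0,h_2)}^{L_0-k}\mathcal{D}_{(a,0)}^{k}u .
\end{equation*}
The $k=0$ term is $\mathcal{D}_{h_2}^{L_0}u(s,\cdot)$, with $\|\mathcal{D}_{h_2}^{L_0}u(s,\cdot)\|_{C(\bR^d)}\le\phi_{\gamma,p,w}(|h_2|^{-1})^{-1}\|u(s,\cdot)\|_{\Lambda^{\phi_{\gamma,p,w}}}$, which part (i) bounds by $N\phi_{\gamma,p,w}(|h_2|^{-1})^{-1}\|u\|_{\mathbf{H}_{p,w}^{\phi,\gamma}}$. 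For $k\ge1$, iterating $\mathcal{D}_{(a,0)}u(t,x)=\int_t^{t+a}f(r,x)\dd r$ gives $\|\mathcal{D}_{(a,0)}^{k}u(s,\cdot)\|_{L_\infty}\le N\int_s^{s+ka}\|f(r,\cdot)\|_{L_\infty}\dd r$; since $[s,s+ka]\subseteq[s,s+h_1]\subseteq[0,T]$ (by the definition of $D_T$ and $ka\le L_0a=h_1$), H\"older against $(w,w^{-1/(p-1)})$ bounds this by $N\widetilde W(s+h_1,s)^{1-1/p}\|f\|_{L_p((0,T),w\dd t;\Lambda^{\phi})}$, and the remaining spatial differences $\mathcal{D}_{(0,h_2)}^{L_0-k}$ only cost a factor $2^{L_0-k}$. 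Summing over $k$ gives
\begin{equation*}
\|\mathcal{D}_{(a,h_2)}^{L_0}u(s,\cdot)\|_{C(\bR^d)}\le N\Big(\phi_{\gamma,p,w}(|h_2|^{-1})^{-1}+\widetilde W(s+h_1,s)^{1-1/p}\Big)\|u\|_{\mathbf{H}_{p,w}^{\phi,\gamma}((0,T)\times\bR^d)},
\end{equation*}
and dividing by the denominator in the statement finishes the proof.

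\emph{Main obstacle.} The difference-operator algebra and the H\"older estimates will be routine; the genuine difficulty is concentrated in part (i) --- identifying $\Lambda_p^{\phi_{\gamma,p,w}}$ as the correct trace space (equivalently, computing the weighted real-interpolation/trace space between $\Lambda^{\phi}$ and $\Lambda^{\phi_\gamma}$ in terms of $\phi_{\gamma,p,w}$) and then running the $A_p(\bR)$-weighted one-dimensional Hardy estimate so that the constant depends only on $p$ and $[w]_{A_p(\bR)}$, while correctly absorbing the endpoint scales $\rho\approx t_0$ and $\rho\approx T$ (and the simplification $\widetilde W(s+h_1,s)=h_1$, $\phi_{\gamma,p,w}=\phi_\gamma$ when $p=\infty$). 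This is exactly where I would lean on the machinery of \cite{Choi_Kim_Lee2023,CLSW2023trace}.
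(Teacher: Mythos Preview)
Your argument for part~(ii) matches the paper's: both split the mixed increment into a pure-time piece handled by H\"older against $(w,w^{-1/(p-1)})$ and a pure-space piece handled by part~(i); your binomial expansion of $\mathcal{D}_{(a,h_2)}^{L_0}$ is in fact more carefully written than the paper's telescoping. For part~(i) the routes differ. The paper isolates, as a standalone lemma, the real-interpolation identity $(\Lambda^{\phi_\gamma}(\bR^d),\Lambda^{\phi}(\bR^d))_{W^{1/p},p}=\Lambda_p^{\phi_{\gamma,p,w}}(\bR^d)$ (proved by reduction to weighted $\ell_p$-sequence spaces and two discrete Hardy inequalities), and then bounds $K(\lambda,u(s,\cdot);\Lambda^{\phi_\gamma},\Lambda^{\phi})$ via a Laplace-transform splitting $\bar u(s,\cdot)=\lambda\cL_s[\bar u](\lambda)-\cL_s[\bar f](\lambda)$ after a cutoff $\bar u=\zeta u$, enlarging $\cL_s$ to $\cL_0$ and integrating against $W(\lambda^{-1})\,\mathrm{d}\lambda/\lambda$ with a citation to \cite{CLSW2023trace}. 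Your averaging decomposition $u(t_0,\cdot)=P_Iu+(\text{integrated }f)$ together with the ratios $\phi_{\gamma,p,w}/\phi_\gamma=W(\rho)^{1/p}$ and $\phi_{\gamma,p,w}/\phi=\rho^{-1}W(\rho)^{1/p}$ is effectively this same $K$-functional step done by hand, so you are reproving the interpolation identification on the fly rather than quoting it; the paper's modular approach buys a reusable lemma, yours a more self-contained argument. One detail to make explicit in your version: the weight $W(\rho)=\int_0^\rho w$ in $\phi_{\gamma,p,w}$ is anchored at the origin while your interval $I$ is anchored at $t_0$, so the Hardy-type inequality you need is not simply the one for the shifted weight --- the paper handles this by passing from $\cL_s$ to $\cL_0$ after the cutoff, and your argument needs an analogous reduction (e.g.\ extend by zero via a cutoff and average from the origin) before invoking \cite{Choi_Kim_Lee2023,CLSW2023trace}.
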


\begin{rem}
If $\gamma\in(0,1)$, $w(t):=1$ and $\phi(\lambda):=\lambda^{\alpha}$ for $\alpha\in(0,\gamma)$, then
$$
\widetilde{W}(s+h_1,s)^{1-\frac{1}{p}}\simeq h_1^{1-\frac{1}{p}},\quad \phi_{\gamma,p,w}(\lambda)\simeq \lambda^{\alpha+\gamma\left(1-\frac{1}{p}\right)}.
$$
This implies that
\begin{equation}
\label{24.01.20.11.14}
    C_{t,x}^{\widetilde{W}^{1-\frac{1}{p}},\phi_{\gamma,p,w}}([0,T]\times\bR^d)=C_{t,x}^{1-\frac{1}{p},\alpha+\gamma-\frac{\gamma}{p}}([0,T]\times\bR^d),
\end{equation}
where $C_{t,x}^{\widetilde{W}^{1-\frac{1}{p}},\phi_{\gamma,p,w}}([0,T]\times\bR^d)$ denotes a normed space with the norm defined as
    $$
    \|u\|_{C([0,T]\times\bR^d)}+\sup_{(s,h_1,h_2)\in D_T}\frac{\|\mathcal{D}_{(h_1/L_0,h_2)}^{L_0}u(s,\cdot)\|_{C(\bR^d)}}{\tilde{W}(s+h_1,s)^{1-\frac{1}{p}}+\phi_{\gamma,p,w}(|h_2|^{-1})^{-1}},
    $$
    and
    $C_{t,x}^{1-\frac{1}{p},\alpha+\gamma-\frac{\gamma}{p}}([0,T]\times\bR^d)$ denotes the anisotropic H\"older space with the norm defined as
    $$
    \|u\|_{C([0,T]\times\bR^d)}+\sup_{(t,x),(s,y)\in[0,T]\times\bR^d}\frac{|u(t,x)-u(s,y)|}{|t-s|^{1-\frac{1}{p}}+|x-y|^{\alpha+\gamma-\frac{\gamma}{p}}}.
    $$

For general $w$ and $\phi$, the equivalence such as \eqref{24.01.20.11.14} cannot be expected.
However, there is a relation between $C_{t,x}^{\widetilde{W}^{1-\frac{1}{p}},\phi_{\gamma,p,w}}([0,T]\times\bR^d)$ and the classical anisotropic H\"older spaces.
Drawing from \cite[Proposition 7.1.5-(8), Corollary 7.2.6, and Proposition 7.2.8]{grafakos2014classical} and \cite[Lemma 2.3]{CLSW2023trace}, there exists an $\varepsilon \in (0,1/2)$ such that the following inequalities hold:
    $$
    |h_1|^{1-\varepsilon}\lesssim\widetilde{W}(s+h_1,s)^{1-\frac{1}{p}}\lesssim |h_1|^{\varepsilon},\quad |h_2|^{L+\gamma-\varepsilon}\lesssim\phi_{\gamma,p,w}(|h_2|^{-1})^{-1}\lesssim |h_2|^{\varepsilon},
    $$
    where $(s,h_1,h_2)\in D_T$ and $|h_2|\leq 1$.
    Consequently, we can deduce that
    $$
    C_{t,x}^{\varepsilon,\varepsilon}([0,T]\times\bR^d)\subseteq C_{t,x}^{\widetilde{W}^{1-\frac{1}{p}},\phi_{\gamma,p,w}}([0,T]\times\bR^d)\subseteq C_{t,x}^{1-\varepsilon,L+\gamma-\varepsilon}([0,T]\times\bR^d).
    $$
\end{rem}

\subsection{Further research directions}
This study focused on evolution equations with time-measurable pseudo-differential operators independent of the spatial variable $x$.
However, several intriguing extensions remain unexplored:

\begin{enumerate}
    \item \textbf{$x$-dependent operators.}
    The results presented in this paper, as well as prior studies, focus on leading operators $\psi(t,-i\nabla)$ that are independent of the spatial variable $x$.
    Extending the theory to spatially dependent operators, such as second-order elliptic operators in either divergence or non-divergence form, is a natural and significant next step.
    For instance:
$$
\mathcal{L}=\sum_{i,j=1}^{d}a^{ij}(t,x)D_{x^ix^j},\quad \text{or}\quad \mathcal{L}=\sum_{i,j=1}^{d}D_{x^i}\left(a^{ij}(t,x)D_{x^j}\right).
$$
    \item \textbf{Variable-order non-local operators.}
    The current study assumes operators of constant order $\gamma$.
    Exploring variable-order non-local operators, such as the infinitesimal generators of subordinate Brownian motions, would be an interesting extension.
\end{enumerate}
I am preparing research to address these extensions, focusing on the challenges posed by spatially dependent operators and variable-order non-local operators.

\subsection{Notations}
Throughout this paper, we will use the following standard notations.

$\bullet$ $\bR$, $\bQ$, $\bZ$, and $\bN$ are the set of all real numbers, rational numbers, integers, and natural numbers, respectively.

$\bullet$ For a set $\{a, b, \ldots \}$ and $X, Y \in \bR$, we say $X \lesssim_{a, b, \ldots} Y$, if $X \le N Y$ holds for some $N = N(a, b, \ldots) > 0$. Sometimes we omit $a, b, \ldots$ and just say $X \lesssim Y$ if the dependency on $a, b, \ldots$ is clear from the context.

$\bullet$ 
        For given $p \in [1,\infty)$, a normed space $F$, and a  measure space $(X,\mathcal{M},\mu)$,  $L_{p}(X,\cM,\mu;F)$ denotes the space of all $\mathcal{M}^{\mu}$-measurable functions $u : X \to F$ with the norm 
        \[
            \left\Vert u\right\Vert _{L_{p}(X,\cM,\mu;F)}:=\left(\int_{X}\left\Vert u(x)\right\Vert _{F}^{p}\mu(\mathrm{d}x)\right)^{1/p}<\infty
        \]
        where $\mathcal{M}^{\mu}$ denotes the completion of $\cM$ with respect to the measure $\mu$. We also denote by $L_{\infty}(X,\cM,\mu;F)$ the space of all $\mathcal{M}^{\mu}$-measurable functions $u : X \to F$ with the norm
        $$
            \|u\|_{L_{\infty}(X,\cM,\mu;F)}:=\inf\left\{r\geq0 : \mu(\{x\in X:\|u(x)\|_F\geq r\})=0\right\}<\infty.
        $$
        If there is no confusion for the given measure and $\sigma$-algebra, we usually omit them.
        
$\bullet$  
        For $\cO\subset \fR^d$ and a normed space $F$, we denote by $C(\cO;F)$ the space of all $F$-valued continuous functions $u : \cO \to F$ with the norm 
        \[
            |u|_{C}:=\sup_{x\in O}|u(x)|_F<\infty.
        \]

$\bullet$ For $p \in [1, \infty]$, $p' := \frac{p}{p-1}$ is the H\"older conjugate ($\frac{1}{0} := \infty$ and $\frac{\infty}{\infty} := 1$).

$\bullet$ $1_A$ is a characteristic function on a set $A$.

\mysection{Existence, Uniqueness, and Regularity of solutions: Proof of Theorem \ref{23.12.05.09.53}}
\label{24.02.03.18.23}
In this section, we focus on proving Theorem \ref{23.12.05.09.53}.
We begin by introducing the Littlewood-Paley characterization of the space $\Lambda_p^{\phi}(\mathbb{R}^d)$.
We choose a function $\Psi$ from the Schwartz class $\mathcal{S}(\mathbb{R}^d)$, whose Fourier transform, $\mathcal{F}[\Psi]$, is nonnegative, supported within the annulus $\{\xi \in \bR^d : \frac{1}{2}\leq |\xi| \leq 2\}$.
We ensure that $\sum_{j\in\mathbb{Z}} \mathcal{F}[\Psi](2^{-j}\xi) = 1$ for all $\xi \not=0$.
Then, we define the Littlewood-Paley projection operators $\Delta_j$ and $S_0$ as 
$$
\Delta_jf(x):=(\Psi_j\ast f)(x),\quad S_0f(x):=(\Phi\ast f)(x),
$$
where $\Psi_j(x):=2^{jd}\Psi(2^jx)$ and $\Phi(x):=\sum_{j\leq 0}\Psi_j(x)$, respectively.
For $p \in [1, \infty]$, $L > 0$, and $\phi \in \mathcal{I}_o(0,L)$, the norm equivalence for $\|\cdot\|_{\Lambda^{\phi}_p(\mathbb{R}^d)}$ is established as follows, as presented in \cite{CLSW2023}:
\begin{equation}
\label{24.01.15.22.40}
        \|f\|_{\Lambda_p^{\phi}(\bR^d)}\simeq
    \|S_0f\|_{L_{\infty}(\bR^d)}+\begin{cases}
\left(\sum_{j=1}^{\infty}\phi(2^j)^p\|\Delta_jf\|_{L_{\infty}(\bR^d)}^p\right)^{1/p},\quad &p\in[1,\infty),\\
\sup_{j\in\bN}\phi(2^{j})\|\Delta_jf\|_{L_{\infty}(\bR^d)},\quad &p=\infty,
\end{cases}
\end{equation}
Henceforth, we will utilize the norm equivalence presented in \eqref{24.01.15.22.40} in place of Definition \ref{24.01.14.12.34}.

\subsection{Continuity of operators}
In general, functions within $\Lambda^{\phi_{\gamma}}(\mathbb{R}^d)$ are not integrable, as evidenced by non-zero constant functions.
Consequently, the expression $\psi(t,-i\nabla)f$ for any $f \in \Lambda^{\phi_{\gamma}}(\mathbb{R}^d)$ is defined in the sense of tempered distributions.
To discuss a strong solution, it is necessary to propose an alternative definition for the operator $\psi(t,-i\nabla)$.

Let us consider $\psi \in \mathbb{S}(\gamma,\kappa,M)$ and $f \in \Lambda^{\phi_{\gamma}}(\mathbb{R}^d)$. We define the following:
 \begin{equation}
 \label{23.11.30.15.20}
     \begin{gathered}
         \psi_j(t,-i\nabla)f(x):=\sum_{k=-1}^{1}\int_{\bR^d}\psi(t,-i\nabla)\Psi_{j+k}(y)\Delta_jf(x-y)\mathrm{d}y,\\
         \psi_0(t,-i\nabla)f(x):=\int_{\bR^d}\psi(t,-i\nabla)(\Phi+\Psi_1)(x)S_0f(x-y)\mathrm{d}y.
     \end{gathered}
 \end{equation}
and
\begin{equation}
\label{23.11.30.16.33}
    \tilde{\psi}(t,-i\nabla)f(x):=\psi_0(t,-i\nabla)f(x)+\sum_{j=1}^{\infty}\psi_j(t,-i\nabla)f(x).
\end{equation}
If $f \in \Lambda^{\phi_{\gamma}}(\mathbb{R}^d) \cap \mathcal{S}(\mathbb{R}^d)$, the almost orthogonality of $\Delta_j$ and $S_0$ implies that:
\begin{equation*}
    \tilde{\psi}(t,-i\nabla)f(x)=\cF^{-1}[\psi(t,\cdot)\cF[f]](x)=:\psi(t,-i\nabla)f(x).
\end{equation*}
Henceforth, the notation $\psi(t,-i\nabla)f$ for any $f \in \Lambda^{\phi_{\gamma}}(\mathbb{R}^d)$ denotes $\tilde{\psi}(t,-i\nabla)f$.
\begin{thm}
\label{23.11.14.14.54}
    Let $\psi\in\mathbb{S}(\gamma,\kappa,M)$ and $f\in\Lambda^{\phi_{\gamma}}(\bR^d)$.
    \begin{enumerate}[(i)]
        \item Then for any $\varphi\in\cS(\bR^d)$,
        \begin{align*}
            \int_{\bR^d}\psi(t,-i\nabla)f(x)\varphi(x)\mathrm{d}x&=\int_{\bR^d}\psi_0(t,-i\nabla)f(x)\varphi(x)\mathrm{d}x+\sum_{j=1}^{\infty}\int_{\bR^d}\psi_j(t,-i\nabla)f(x)\varphi(x)\mathrm{d}x\\
            &=\int_{\bR^d}f(x)\bar{\psi}(t,-i\nabla)\varphi(x)\mathrm{d}x,
        \end{align*}
        where $\bar{\psi}(t,-i\nabla)\varphi(x):=\cF^{-1}\left[\overline{\psi(t,\cdot)}\cF[\varphi]\right](x)$ and $\overline{\psi(t,\xi)}$ is the complex conjegate of $\psi(t,\xi)$.
        \item Then $\psi(t,-i\nabla):\Lambda^{\phi_{\gamma}}(\bR^d)\to \Lambda^{\phi}(\bR^d)$ is a bounded linear operator:
    $$
    \sup_{t\in[0,T]}\|\psi(t,-i\nabla)f\|_{\Lambda^{\phi}(\bR^d)}\leq N(d,\gamma,M,\Phi,\Psi)\|f\|_{\Lambda^{\phi_{\gamma}}(\bR^d)}.
    $$
    \end{enumerate} 
\end{thm}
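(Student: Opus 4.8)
\emph{Overall strategy.} Both parts are consequences of a single uniform-in-$t$ estimate on the convolution kernels appearing in \eqref{23.11.30.15.20}, combined with the Littlewood--Paley norm equivalence \eqref{24.01.15.22.40}. The plan is to first establish the bound
\[
\sup_{t}\|\psi(t,-i\nabla)\Psi_j\|_{L_1(\bR^d)}\le N\,2^{j\gamma}\ \ (j\in\bZ),\qquad \sup_{t}\|\psi(t,-i\nabla)(\Phi+\Psi_1)\|_{L_1(\bR^d)}\le N,
\]
with $N=N(d,\gamma,M,\Phi,\Psi)$, and to note that the same bounds hold with $\psi$ replaced by $\overline{\psi(t,\cdot)}$, which satisfies the estimates of Definition \ref{24.01.11.09.20} with the same $\gamma,M$.

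\emph{Proof of the kernel bound.} Fix $j$ and rescale: $\psi(t,-i\nabla)\Psi_j(x)=2^{jd}G_j(2^jx)$ with $G_j:=\cF^{-1}[\psi(t,2^j\cdot)\cF[\Psi]]$, so $\|\psi(t,-i\nabla)\Psi_j\|_{L_1}=\|G_j\|_{L_1}$ is scale invariant. On $\{\tfrac12\le|\eta|\le2\}\supseteq\supp\cF[\Psi]$, Leibniz' rule and $|D_\xi^\alpha\psi(t,\xi)|\le M|\xi|^{\gamma-|\alpha|}$ give $\|D_\eta^\alpha(\psi(t,2^j\eta)\cF[\Psi](\eta))\|_{L_\infty}\lesssim_{d,\gamma,M,\Psi}2^{j\gamma}$ for all $|\alpha|\le n:=\lfloor d/2\rfloor+1$. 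Since $2n>d$ we have $(1+|x|^2)^{-n/2}\in L_2(\bR^d)$, so by Cauchy--Schwarz and Plancherel
\[
\|G_j\|_{L_1}\le\|(1+|x|^2)^{-n/2}\|_{L_2}\,\|(1+|x|^2)^{n/2}G_j\|_{L_2}\lesssim_d\sum_{|\alpha|\le n}\|D^\alpha(\psi(t,2^j\cdot)\cF[\Psi])\|_{L_2}\lesssim_{d,\gamma,M,\Psi}2^{j\gamma},
\]
the last step because the integrand lives in a fixed annulus; this is the only place the precise number $\lfloor d/2\rfloor+1$ of symbol derivatives is consumed. For $\Phi+\Psi_1$, write $\cF[\Phi+\Psi_1]=\sum_{j\le1}\cF[\Psi](2^{-j}\cdot)$, apply the annular estimate to each summand, and sum the geometric series $\sum_{j\le1}2^{j\gamma}<\infty$; its convergence uses $\gamma>0$ and is precisely what absorbs the singularity of $\psi$ at $\xi=0$. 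I expect this kernel bound to be the main obstacle; everything below is bookkeeping with frequency supports and \eqref{24.01.15.22.40}.

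\emph{Part (ii).} By Young's inequality and the kernel bound, $\|\psi_j(t,-i\nabla)f\|_{L_\infty}\lesssim 2^{j\gamma}\|\Delta_jf\|_{L_\infty}$ and $\|\psi_0(t,-i\nabla)f\|_{L_\infty}\lesssim\|S_0f\|_{L_\infty}$, uniformly in $t$. Inspecting Fourier supports, $\cF[\psi_j(t,-i\nabla)f]$ is supported in $\{2^{j-1}\le|\xi|\le2^{j+1}\}$ and $\cF[\psi_0(t,-i\nabla)f]$ in $\{|\xi|\lesssim1\}$, so in \eqref{23.11.30.16.33} only boundedly many terms survive under $\Delta_m$ or $S_0$, namely $\Delta_m\tilde\psi(t,-i\nabla)f=\sum_{j\ge1,\,|j-m|\le2}\Delta_m\psi_j(t,-i\nabla)f$ (plus a $\psi_0$-term when $m$ is below a fixed threshold), and similarly for $S_0$. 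Feeding the kernel bounds into \eqref{24.01.15.22.40} and using $\phi(2^m)\simeq\phi(2^j)$ and $2^{j\gamma}\simeq2^{m\gamma}$ for $|m-j|\le2$ (a property of $\phi\in\cI_o(0,L)$; see \cite{CLSW2023trace}), one gets $\phi(2^m)\|\Delta_m\tilde\psi(t,-i\nabla)f\|_{L_\infty}\lesssim\sum_{|j-m|\le2}\phi_\gamma(2^j)\|\Delta_jf\|_{L_\infty}$; taking the supremum over $m$ and bounding the finitely many low-frequency terms crudely by $\|f\|_{L_\infty}\le\|f\|_{\Lambda^{\phi_\gamma}}$ yields $\sup_t\|\tilde\psi(t,-i\nabla)f\|_{\Lambda^{\phi}}\lesssim\|f\|_{\Lambda^{\phi_\gamma}}$. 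Linearity is clear.

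\emph{Part (i).} For $f\in L_\infty(\bR^d)$ and $\varphi\in\cS(\bR^d)$, absolute convergence of $\sum_{j\ge1}\int\psi_j(t,-i\nabla)f\,\varphi\,dx$ follows by inserting a fattened frequency cutoff $\Lambda_j$ equal to $1$ on $\{2^{j-1}\le|\xi|\le2^{j+1}\}$: then $\int\psi_j(t,-i\nabla)f\,\varphi=\int\psi_j(t,-i\nabla)f\,(\Lambda_j\varphi)$ with $\|\Lambda_j\varphi\|_{L_1}\lesssim_N2^{-jN}$ for every $N$ (Schwartz decay of $\cF[\varphi]$, rescaling as above), while $\|\psi_j(t,-i\nabla)f\|_{L_\infty}\lesssim 2^{j\gamma}\|f\|_{L_\infty}$, so the $j$-th term is $O(2^{j(\gamma-N)})$. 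Granting this, Fubini's theorem—legitimate since each kernel $\psi(t,-i\nabla)\Psi_{j+k}$ and $\psi(t,-i\nabla)(\Phi+\Psi_1)$ lies in $L_1(\bR^d)$ and $\varphi\in\cS(\bR^d)$—transfers these kernels onto $\varphi$; recombining with the almost-orthogonality identities $\cF[\Psi_{j-1}+\Psi_j+\Psi_{j+1}]\,\cF[\Psi_j]=\cF[\Psi_j]$ and $\cF[\Phi+\Psi_1]\,\cF[\Phi]=\cF[\Phi]$, summing $\sum_{j\ge1}\cF[\Psi_j]+\cF[\Phi]=1$, and invoking Parseval's identity collapses the whole sum to $\int f\,\bar\psi(t,-i\nabla)\varphi\,dx$. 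Finally $\bar\psi(t,-i\nabla)\varphi\in L_1(\bR^d)$: decomposing $\cF[\varphi]=\cF[\Phi]\cF[\varphi]+\sum_{j\ge1}\cF[\Psi](2^{-j}\cdot)\cF[\varphi]$ and rescaling as in the kernel bound, the $j$-th Littlewood--Paley piece of $\bar\psi(t,-i\nabla)\varphi$ has $L_1$ norm $\lesssim_N2^{-jN}$ for every $N$, so the series defining $\bar\psi(t,-i\nabla)\varphi$ converges absolutely in $L_1(\bR^d)$.
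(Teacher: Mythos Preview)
Your proof is correct and follows essentially the same approach as the paper: both hinge on the uniform $L_1$ kernel bound $\|\psi(t,-i\nabla)\Psi_j\|_{L_1}\lesssim 2^{j\gamma}$ proved via rescaling, Cauchy--Schwarz, and Plancherel with $\lfloor d/2\rfloor+1$ derivatives, then feed it through Littlewood--Paley almost orthogonality and \eqref{24.01.15.22.40}. The only organizational difference is that the paper proves the kernel bound by splitting the $L_1$ integral into $|x|\le c^{-1}$ and $|x|>c^{-1}$ (rather than your uniform weight $(1+|x|^2)^{n/2}$) and uses part (i) to rewrite $\Delta_j(\psi(t,-i\nabla)f)$ via duality before estimating, whereas you argue part (ii) directly from the series definition; these are equivalent cosmetic choices.
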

\begin{proof}
$(i)$ Due to the dominated convergence theorem, the first equality holds.
If we use Fubini's theorem, Plancherel's theorem, and the dominated convergence theorem, then the second equality is also obtained 

$(ii)$  By $(i)$ and almost orthogonal property of Littlewood-Paley operators,
    \begin{align*}
        \Delta_j(\psi(t,-i\nabla)f)(x)&=(\Delta_{j-1}+\Delta_j+\Delta_{j+1})\Delta_j(\psi(t,-i\nabla)f)(x)\\
        &=\int_{\bR^d}(\Delta_{j-1}+\Delta_j+\Delta_{j+1})f(x-y)\bar{\psi}(t,-i\nabla)\Psi_j(y)\mathrm{d}y.
    \end{align*}
    To complete the proof, it suffices to prove that
    for $c>0$,
    \begin{equation}
    \label{23.12.27.18.38}
    \sup_{t\in[0,T]}\|\psi(t,-i\nabla)h_{c}\|_{L_1(\bR^d)}\leq N(d,\gamma,h,M)c^{\gamma},
    \end{equation}
    where $h_c(x):=c^dh(cx)$ and $h\in\mathcal{S}(\mathbb{R}^d)$.
    Indeed, if we have \eqref{23.12.27.18.38}, then
    \begin{align*}
        \phi(2^j)\|\Delta_j(\psi(t,-i\nabla)f)\|_{L_{\infty}(\bR^d)}&\leq N\phi(2^j)\|\psi(t,-i\nabla)\Psi_j\|_{L_1(\bR^d)}\sum_{k=-1}^{1}\|\Delta_{j+k}f\|_{L_{\infty}(\bR^d)}\\
        &\leq N\phi(2^j)2^{j\gamma}\sum_{k=-1}^{1}\|\Delta_{j+k}f\|_{L_{\infty}(\bR^d)}\\
        &\leq N(d,\gamma,M,\Psi)\|f\|_{\Lambda^{\phi_{\gamma}}(\bR^d)}.
    \end{align*}
    Similarly, we also have
    $$
    \|S_0(\psi(t,-i\nabla)f)\|_{L_{\infty}(\bR^d)}\leq N(d,\gamma,M,\Phi,\Psi)\|f\|_{\Lambda^{\phi_{\gamma}}(\bR^d)}.
    $$
    
    We end the proof by proving \eqref{23.12.27.18.38}.
    First, we decompose $\|\psi(t,-i\nabla)h_{c}\|_{L_1(\bR^d)}$ into two parts;
    $$
    \|\psi(t,-i\nabla)h_{c}\|_{L_1(\bR^d)}=\int_{|x|\leq c^{-1}}\cdots\mathrm{d}x+\int_{|x|>c^{-1}}\cdots\mathrm{d}x=:I_{c,1}(t)+I_{c,2}(t).
    $$
    By the assumptions on $\psi$ and the change of variable formula, we have
    \begin{equation}
    \label{24.11.21.13.26}
    \begin{aligned}
    |I_{c,1}(t)|&\leq Nc^{-d}\|\psi(t,-i\nabla)h_{c}\|_{L_{\infty}(\bR^d)}\\
    &\leq Nc^{-d}\int_{\bR^d}|\xi|^{\gamma}|\cF[h](c^{-1}\xi)|\mathrm{d}\xi\\
    &= Nc^{\gamma}\int_{\mathbb{R}^d}|\xi|^{\gamma}\mathcal{F}[h](\xi)\mathrm{d}\xi=N(d,\gamma,h,M)c^{\gamma}.
    \end{aligned}
    \end{equation}
    Since $h\in\mathcal{S}(\mathbb{R}^d)$, the integration $\int_{\mathbb{R}^d}|\xi|^{\gamma}\mathcal{F}[h](\xi)\mathrm{d}\xi$ in \eqref{24.11.21.13.26} is finite.
    For $I_{c,2}$, we use the Cauchy-Schwartz inequality and Plancherel's theorem;
    \begin{align*}
        |I_{c,2}(t)|&\leq \left(\int_{|x|>c^{-1}}|x|^{-2d_2}\mathrm{d}x\right)^{1/2}\left(\int_{|x|>c^{-1}}|x|^{2d_2}|\psi(t,-i\nabla)h_c(x)|^2\mathrm{d}x\right)^{1/2}\\
        &\leq N(d,\gamma,h,M)c^{d_2-d/2}\left(\int_{\bR^d}\left|D_{\xi}^{d_2}\left(\psi(t,\cdot)\cF[h](c^{-1}\cdot)\right)(\xi)\right|^2\mathrm{d}\xi\right)^{1/2}\leq Nc^{\gamma},
    \end{align*}
    where $d_2=\lfloor d/2\rfloor+1$.
    The theorem is proved.
\end{proof}

\subsection{Notion of solutions and their properties}
In this subsection, we discuss the notion of solutions to
\begin{equation}
    \label{23.11.30.14.23}
        \begin{cases}
        \partial_tu(t,x)=\psi(t,-i\nabla)u(t,x)+f(t,x),\quad (t,x)\in(0,T)\times\bR^d,\\
        u(0,x)=u_0(x).
    \end{cases}
    \end{equation}
It turns out that
\begin{align*}
p(t,s,x):=1_{0 \leq s< t} \cdot \frac{1}{(2\pi)^{d}}\int_{\bR^d} \exp\left(\int_{s}^t\psi(r,\xi)\mathrm{d}r\right)\mathrm{e}^{ix\cdot\xi}\mathrm{d}\xi+1_{t=s}\delta_0(x).
\end{align*}
is the \emph{fundamental solution} of \eqref{23.11.30.14.23}, \textit{i.e.}, $p(\cdot,s,\cdot)$ satisfies
\begin{equation*}
        \begin{cases}
        \partial_tp(t,s,x)=\psi(t,-i\nabla)p(t,s,x),\quad (t,x)\in(s,\infty)\times\bR^d,\\
        p(s,s,x)=\delta_0(x).
    \end{cases}
\end{equation*}
Here $\delta_0$ is the centered Dirac delta distribution.
The sharp $L_p$-estimates for the fundamental solution and their proof were presented in \cite[Theorem 6.1]{Choi_Kim2022} and \cite[Theorem 5.1, Corollary 5.3]{Choi_Kim_Lee2023}, thus, we present the following theorem without proofs.
\begin{thm}
\label{FSetimate}
    Let $\psi\in\mathbb{S}(\gamma,\kappa,M)$.
    Then for $m\in \{0,1\}$, and $\delta\in(0,1)$, there exists a positive constant $N=N(d,\delta,\gamma,\kappa,M,m)$ such that for all $t>s\geq0$,    \begin{equation*}
            \begin{aligned}
                &\left\|\partial_t^m\Delta_jp(t,s,\cdot)\right\|_{L_1(\bR^d)}\leq N\mathrm{e}^{-\kappa|t-s|2^{j\gamma}\times\frac{(1-\delta)}{2^{\gamma}}}2^{jm\gamma},\quad \forall j\in\bZ,\\
                &\left\|\partial_t^mS_0
                p(t,s,\cdot)\right\|_{L_1(\bR^d)}\leq N,\\
                &\left\|\partial_t^mp(t,s,\cdot)\right\|_{L_1(\bR^d)}\leq N(t-s)^{-m},\\
                &\left\||\cdot|^{d_2}\partial_t^mp(t,s,\cdot)\right\|_{L_2(\bR^d)}\leq N(t-s)^{-m+\frac{1}{\gamma}\left(d_2-\frac{d}{2}\right)},
            \end{aligned}
        \end{equation*}
        where $d_2:=\lfloor d/2\rfloor+1$.
\end{thm}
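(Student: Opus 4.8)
The plan is to prove all four estimates by Fourier analysis, starting from the explicit kernel. For $t>s$,
$$p(t,s,\cdot)=\cF^{-1}\big[\hat p(t,s,\cdot)\big],\qquad \hat p(t,s,\xi):=\exp\Big(\int_s^t\psi(r,\xi)\,\mathrm{d}r\Big),$$
and, writing $g(t,s,\xi):=\int_s^t\psi(r,\xi)\,\mathrm{d}r$, also $\cF[\partial_tp](t,s,\xi)=\psi(t,\xi)\hat p(t,s,\xi)$. From $\psi\in\bS(\gamma,\kappa,M)$ one has $\Re g(t,s,\xi)\le-\kappa(t-s)|\xi|^{\gamma}$ and $|D^{\alpha}_{\xi}g(t,s,\xi)|\le M(t-s)|\xi|^{\gamma-|\alpha|}$ for $1\le|\alpha|\le d_2$. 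Differentiating $e^{g}$ with the Fa\`{a} di Bruno formula, estimating each product of derivatives of $g$ by these bounds, and absorbing the resulting polynomial factors with the elementary inequality $\tau^{k}e^{-\kappa\tau}\le C(k,\delta,\kappa)\,e^{-\kappa(1-\delta)\tau}$ ($\tau\ge0$), one obtains the single pointwise estimate, for $0\le|\alpha|\le d_2$ and $m\in\{0,1\}$,
$$\big|D^{\alpha}_{\xi}\big[\psi(t,\cdot)^{m}\hat p(t,s,\cdot)\big](\xi)\big|\le N\,|\xi|^{m\gamma-|\alpha|}\big(1+(t-s)|\xi|^{\gamma}\big)\,e^{-\kappa(1-\delta)(t-s)|\xi|^{\gamma}},$$
with $N=N(d,\delta,\gamma,\kappa,M,m)$; every inequality of the theorem will be read off from this.

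To make the dependence on $t-s$ explicit (and to get independence of $s$) I would rescale. The substitution $\xi\mapsto(t-s)^{-1/\gamma}\eta$ replaces $g$ by a function satisfying the normalized ($t-s=1$) symbol bounds and $\psi(t,\cdot)$ by a symbol still in $\bS(\gamma,\kappa,M)$; combined with a change of variables in $x$ it turns $\|\partial_t^mp(t,s,\cdot)\|_{L_1}$ and $\||\cdot|^{d_2}\partial_t^mp(t,s,\cdot)\|_{L_2}$ into $(t-s)^{-m}$, respectively $(t-s)^{-m+\frac1\gamma(d_2-\frac d2)}$, times normalized quantities, so it suffices to treat $t-s=1$. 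For the dyadic pieces I would instead rescale by $\xi\mapsto2^{j}\eta$, which puts $\Delta_jp(t,s,\cdot)$ at unit frequency; the key point is that the resulting multiplier is supported in $\tfrac12\le|\eta|\le2$, where the low-frequency singularity of the $\xi$-derivatives is irrelevant, so the pointwise estimate yields, uniformly in $j\in\bZ$ and in $t,s$, a bound $\lesssim2^{jm\gamma}e^{-\kappa(1-\delta)(t-s)2^{j\gamma}/2^{\gamma}}$ for all $\eta$-derivatives up to order $d_2$.

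Passing from such Fourier bounds to $L_1$ uses a Bernstein-type inequality: if $\cF[G]$ is supported in $\{|\xi|\le CR\}$ then, by Cauchy--Schwarz and Plancherel (the first integral below being finite because $2d_2>d$),
$$\|G\|_{L_1}\lesssim R^{-d/2}\Big(\|\cF[G]\|_{L_2}+R^{d_2}\sum_{|\alpha|=d_2}\|D^{\alpha}_{\xi}\cF[G]\|_{L_2}\Big).$$
Applied with $G=\partial_t^m\Delta_jp(t,s,\cdot)$, $R=2^{j}$, and the annular estimate above, this gives the first inequality after relabelling $\delta$; the factor $\psi(t,\cdot)^m$ for $m=1$ contributes the gain $|\xi|^{\gamma}\simeq2^{j\gamma}$, hence the $2^{jm\gamma}$ and, through the scaling, the $(t-s)^{-m}$. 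The fourth inequality follows from Plancherel, $\||\cdot|^{d_2}\partial_t^mp(t,s,\cdot)\|_{L_2}\simeq\sum_{|\alpha|=d_2}\|D^{\alpha}_{\xi}(\psi(t,\cdot)^m\hat p(t,s,\cdot))\|_{L_2}$, the pointwise estimate, and the scaling. For the remaining two, $\|S_0p(t,s,\cdot)\|_{L_1}\le\|\Phi\|_{L_1}\|p(t,s,\cdot)\|_{L_1}$ reduces the second to the third, and the third (after the reduction to $t-s=1$) is handled by splitting $\hat p=\chi_0\hat p+(1-\chi_0)\hat p$ for a fixed cut-off $\chi_0\equiv1$ near $0$: the high-frequency part has all $\xi$-derivatives up to order $d_2$ rapidly decaying, so the weighted-$L_2$ argument applies, while for $\chi_0\hat p=\chi_0+\chi_0(\hat p-1)$ one uses that $\cF^{-1}[\chi_0]$ has a fixed $L_1$-norm and that $\chi_0(\hat p-1)$ vanishes at the origin to order $\gamma$.

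The main obstacle is exactly this low-frequency regime — the $S_0$-, global $L_1$- and weighted-$L_2$ statements, and uniformity of the constants in $s$ and $t-s$. Away from $\xi=0$ the exponential weight controls everything and the crude weighted-$L_2$ bound suffices; but near $\xi=0$ the symbol bounds only give $|D^{\alpha}_{\xi}\hat p(t,s,\xi)|\lesssim(t-s)|\xi|^{\gamma-|\alpha|}$, which is too singular to absorb naively, and one must exploit the order-$\gamma$ vanishing at the origin of $\hat p-1$ (and of $\psi(t,\cdot)^m\hat p$ when $m\ge1$), which forces the low-frequency part of the kernel to decay like $|x|^{-d-\gamma}$ and hence be integrable. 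Carrying this part out carefully is precisely the content of \cite[Theorem 6.1]{Choi_Kim2023} and \cite[Theorem 5.1, Corollary 5.3]{Choi_Kim_Lee2023}, which is why Theorem \ref{FSetimate} is quoted above without reproducing the proof.
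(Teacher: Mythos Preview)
The paper does not prove Theorem \ref{FSetimate}; it states the result and refers to \cite[Theorem 6.1]{Choi_Kim2023} and \cite[Theorem 5.1, Corollary 5.3]{Choi_Kim_Lee2023} for the proof, exactly as you note in your last sentence. Your sketch is a correct outline of the standard Fourier-analytic argument (Fa\`a di Bruno on $e^{g}$, absorbing polynomial factors into the exponential at a $\delta$-loss, scaling, and the Cauchy--Schwarz/Plancherel passage to $L_1$ on compact Fourier support), and your identification of the low-frequency regime as the only genuine difficulty is accurate.

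One small slip: your reduction of the second estimate to the third via $\|S_0\partial_t^mp\|_{L_1}\le\|\Phi\|_{L_1}\|\partial_t^mp\|_{L_1}$ only delivers the uniform bound $\le N$ when $m=0$; for $m=1$ it gives $\le N(t-s)^{-1}$, which blows up as $t\downarrow s$. The fix is immediate: since $S_0=\sum_{j\le0}\Delta_j$, sum your first inequality over $j\le0$ (the factor $2^{j\gamma}$ makes the series converge, and the exponential is $\le1$), or argue directly from your annular pointwise estimate using that $\cF[\Phi]$ is supported in $\{|\xi|\le2\}$ where $|\psi(t,\xi)|\le M2^{\gamma}$.
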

In general, the symbol $\psi(t,\xi)$ is complex-valued, thus
$$
\int_{\bR^d}p(t,s,y)\mathrm{d}y=\exp\left(\int_s^t\psi(r,0)\mathrm{d}r\right)\neq 1.
$$
However,
    \begin{equation}
\label{23.12.03.14.42}
    \lim_{t\downarrow s}\int_{\bR^d}p(t,s,y)\mathrm{d}y=1.
    \end{equation}

Now, we provide the concept of solutions.
\begin{defn}[Solutions]
\label{24.01.14.12.42}
    Let $p\in(1,\infty]$, $w\in A_p(\bR)$, $\phi\in\cI_o(0,L)$, $\psi\in\bS(\gamma,\kappa,M)$, $u_0\in \Lambda_p^{\phi_{\gamma,p,w}}(\bR^d)$ and $f\in L_p((0,T),w\,\mathrm{d}t;\Lambda^{\phi}(\bR^d))$.
    \begin{enumerate}[(i)]
        \item We say that
        $$
        u(t,x):=\int_{\bR^d}p(t,0,x-y)u_0(y)\mathrm{d}y+\int_0^t\int_{\bR^d}p(t,s,x-y)f(s,y)\mathrm{d}y\mathrm{d}s
        $$
        is the \emph{mild solution} of \eqref{23.11.30.14.23}.
        \item We say that $u\in L_p((0,T),w\,\mathrm{d}t;L_{\infty}(\bR^d))$ is a \emph{weak solution} of \eqref{23.11.30.14.23} if for $\varphi\in\cS(\bR^d)$,
        $$
        \int_{\bR^d}(u(t,x)-u_0(x))\varphi(x)\mathrm{d}x=\int_0^t\int_{\bR^d}\left(u(s,x)\bar{\psi}(s,-i\nabla)\varphi(x)+f(s,x)\varphi(x)\right)\mathrm{d}x\mathrm{d}s
        $$
        for all $t\in[0,T)$.
        Here $\bar{\psi}(s,-i\nabla)$ is a pseudo-differential operator with a symbol $\overline{\psi(s,\xi)}$.
        \item We say that $u\in L_p((0,T),w\,\mathrm{d}t;\Lambda^{\phi_{\gamma}}(\bR^d))$ is a \emph{strong solution} of \eqref{23.11.30.14.23} if
        $$
        u(t,x)=u_0(x)+\int_0^t(\psi(s,-i\nabla)u(s,x)+f(s,x))\mathrm{d}s
        $$
        for all $(t,x)\in[0,T)\times\bR^d$.
        Here $\psi(s,-i\nabla)u(s,x)$ is a function defined in \eqref{23.11.30.16.33}.
    \end{enumerate}
\end{defn}

\begin{rem}
\label{23.12.05.12.22}
    Due to Theorem \ref{23.11.14.14.54}, if $u$ is a strong solution to \eqref{23.11.30.14.23}, then $u$ is a weak solution also.
\end{rem}
For the mild solution of \eqref{23.11.30.14.23}, the following regularity estimate holds, and this plays a crucial role in the proof of Theorem \ref{23.12.05.09.53}.
\begin{thm}
\label{23.11.26.16.09}
    Let $\phi\in\cI_o(0,L)$, $p\in[1,\infty]$, $w\in A_p(\bR^d)$ and $\psi\in \mathbb{S}(\gamma,\kappa,M)$.
    Let also
\begin{equation*}
\begin{gathered}
\mathcal{G}f(t,x):= \int_0^t\int_{\bR^d}p(t,s,x-y) f(s,y)\mathrm{d}y\mathrm{d}s,\\
\mathcal{G}_0u_0(t,x):=\int_{\bR^d}p(t,0,x-y)u_0(y)\mathrm{d}y.
\end{gathered}
\end{equation*}

$(i)$ If $p\in(1,\infty]$ and $f\in {L_{p}((0,T),w\,\mathrm{d}t;\Lambda^{\phi}(\bR^d))}$, then $\mathcal{G}f\in {L_{p}((0,T),w\,\mathrm{d}t;\Lambda^{\phi_{\gamma}}(\bR^d))}$;
$$
\|\mathcal{G}f\|_{L_{p}((0,T),w\,\mathrm{d}t;\Lambda^{\phi_{\gamma}}(\bR^d))}\leq N\|f\|_{{L_{p}((0,T),w\,\mathrm{d}t;\Lambda^{\phi}(\bR^d))}},
$$
where $N=N(d,\delta,\gamma,\kappa,M,p,T,[w]_{A_p(\bR)})$.

$(ii)$ If $f\in {L_{1}((0,T),w\,\mathrm{d}t;\Lambda^{\phi}(\bR^d))}$, then $\mathcal{G}f\in L_{1,\infty}((0,T),w\,\mathrm{d}t;\Lambda^{\phi_{\gamma}}(\bR^d))$;
$$
\sup_{\lambda>0}\lambda w(\{t\in(0,T):\|\mathcal{G}f(t,\cdot)\|_{\Lambda^{\phi_{\gamma}}(\bR^d)}\geq\lambda\})\leq N\|f\|_{{L_{1}((0,T),w\,\mathrm{d}t;\Lambda^{\phi}(\bR^d))}},
$$
where $N=N(d,\delta,\gamma,\kappa,M,T,[w]_{A_1(\bR)})$.

$(iii)$ If $p\in(0,\infty]$, $w\in A_{\infty}(\bR)$ and $u_0\in\Lambda^{{\phi_{\gamma,p,w}}}(\bR^d)$, then
$$
\|\mathcal{G}_0u_0\|_{L_p((0,T),w\,\mathrm{d}t;\Lambda^{\phi_{\gamma}}(\bR^d))}\leq \|u_0\|_{\Lambda^{{\phi_{\gamma,p,w}}}(\bR^d)},
$$
where $N=N(d,\delta,\gamma,\kappa,M,p,T,[w]_{A_p(\bR)})$.
\end{thm}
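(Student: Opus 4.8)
The plan is to reduce all three estimates to the Littlewood--Paley blocks via the characterization \eqref{24.01.15.22.40}, and to convert the time integral against the fundamental solution into a Hardy--Littlewood maximal operator in the $t$-variable, in the spirit of Krylov's pointwise bound \eqref{24.01.15.17.42}. Writing $\mathcal{G}f(t,\cdot)=\int_0^t p(t,s,\cdot)\ast f(s,\cdot)\,\mathrm{d}s$ and using the (three-fold) almost orthogonality of the $\Delta_j$, one has $\Delta_j\mathcal{G}f(t,\cdot)=\int_0^t(\Delta_j p(t,s,\cdot))\ast\bigl(\sum_{|k|\le1}\Delta_{j+k}f(s,\cdot)\bigr)\,\mathrm{d}s$. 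Young's inequality, the $\Delta_j$-bound of Theorem \ref{FSetimate}, the estimate $\|\Delta_{j+k}f(s,\cdot)\|_{L_\infty}\lesssim\phi(2^{j+k})^{-1}\|f(s,\cdot)\|_{\Lambda^\phi}$, and the comparability $\phi(2^j)\simeq\phi(2^{j+k})$ for $|k|\le1$ (a consequence of $\phi\in\cI_o(0,L)$) then give $\phi_\gamma(2^j)\|\Delta_j\mathcal{G}f(t,\cdot)\|_{L_\infty}\le N(K_j\ast F)(t)$, where $F(s):=\|f(s,\cdot)\|_{\Lambda^\phi}1_{(0,T)}(s)$ and $K_j(r):=2^{j\gamma}e^{-cr2^{j\gamma}}1_{(0,\infty)}(r)$ with $c=\kappa(1-\delta)2^{-\gamma}$. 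For the low-frequency block, the frequency support of $S_0$ gives $S_0\mathcal{G}f(t,\cdot)=\int_0^t S_0 p(t,s,\cdot)\ast(S_0+\Delta_1)f(s,\cdot)\,\mathrm{d}s$, whence $\|S_0\mathcal{G}f(t,\cdot)\|_{L_\infty}\le N\int_0^t F(s)\,\mathrm{d}s$ by the $S_0$-bound of Theorem \ref{FSetimate}. Together with \eqref{24.01.15.22.40} this yields the pointwise-in-time inequality
\[
\|\mathcal{G}f(t,\cdot)\|_{\Lambda^{\phi_\gamma}}\le N\sup_{j}(K_j\ast F)(t)+N\int_0^T F(s)\,\mathrm{d}s .
\]

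\textbf{Parts (i) and (ii).} Each $K_j$ is a dilation $K_j(r)=2^{j\gamma}E(2^{j\gamma}r)$ of the fixed nonnegative, nonincreasing, integrable profile $E(\rho):=e^{-c\rho}1_{(0,\infty)}(\rho)$; expressing $E$ as a superposition of indicators $1_{(0,\lambda)}$ gives $\sup_j(K_j\ast F)(t)\le\|E\|_{L_1}\,\mathcal{M}F(t)$, hence $\|\mathcal{G}f(t,\cdot)\|_{\Lambda^{\phi_\gamma}}\le N\mathcal{M}F(t)+N\int_0^T F$. For (i) with $p\in(1,\infty)$ one takes the $L_p((0,T),w\,\mathrm{d}t)$-norm: the weighted maximal inequality (valid since $w\in A_p(\bR)$) bounds the first term by $N([w]_{A_p})\|F\|_{L_p((0,T),w\,\mathrm{d}t)}$, while H\"older's inequality and the $A_p$-condition bound the constant $\int_0^T F$ times $(\int_0^T w)^{1/p}$ by $T[w]_{A_p}^{1/p}\|F\|_{L_p((0,T),w\,\mathrm{d}t)}$; for $p=\infty$ the weight is irrelevant and both terms are $\le(1+T)\|f\|_{L_\infty((0,T);\Lambda^\phi)}$. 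For (ii) one instead uses the weighted weak $(1,1)$ bound for $\mathcal{M}$ on $A_1$ weights, and the fact that $w\in A_1(\bR)$ forces $w\ge [w]_{A_1}^{-1}T^{-1}\int_0^T w$ a.e.\ on $(0,T)$, so the constant term $\int_0^T F$ contributes at most $[w]_{A_1}T\|f\|_{L_1((0,T),w\,\mathrm{d}t;\Lambda^\phi)}$ to the $L_{1,\infty}((0,T),w\,\mathrm{d}t)$-quasinorm.

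\textbf{Part (iii).} Here $\mathcal{G}_0u_0(t,\cdot)=p(t,0,\cdot)\ast u_0$, and the same decomposition together with $\|\Delta_j p(t,0,\cdot)\|_{L_1}\le Ne^{-ct2^{j\gamma}}$ and the comparabilities $\phi_{\gamma,p,w}(2^{j+k})\simeq\phi_{\gamma,p,w}(2^j)$, $\int_0^{2^{-(j+k)\gamma}}w\simeq\int_0^{2^{-j\gamma}}w$ (for $|k|\le1$, using that every $w\in A_\infty(\bR)$ is doubling) give, with $d_m:=\phi_{\gamma,p,w}(2^m)\|\Delta_m u_0\|_{L_\infty}$,
\[
\phi_\gamma(2^j)\|\Delta_j\mathcal{G}_0u_0(t,\cdot)\|_{L_\infty}\le Ne^{-ct2^{j\gamma}}\Bigl(\int_0^{2^{-j\gamma}}w\Bigr)^{-1/p}\sum_{|k|\le1}d_{j+k}.
\]
The decisive point is the inequality $\int_0^\infty e^{-c't2^{j\gamma}}w(t)\,\mathrm{d}t\le N\int_0^{2^{-j\gamma}}w(t)\,\mathrm{d}t$, valid for any doubling weight: split the integral over the intervals $[m2^{-j\gamma},(m+1)2^{-j\gamma}]$, bound $e^{-c't2^{j\gamma}}$ by $e^{-c'm}$ there, and use the polynomial growth $\int_0^{(m+1)r}w\lesssim(m+1)^s\int_0^r w$ coming from doubling, so that the resulting series converges. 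Combining $(\sup_j)^p\le\sum_j$, Fubini, this inequality (with $c'=cp$), and the $\ell_p$-summability of $(d_j)$ afforded by $u_0\in\Lambda_p^{\phi_{\gamma,p,w}}(\bR^d)$, one gets $\int_0^T(\sup_j\phi_\gamma(2^j)\|\Delta_j\mathcal{G}_0u_0(t,\cdot)\|_{L_\infty})^p w\,\mathrm{d}t\le N\sum_j d_j^p\le N\|u_0\|_{\Lambda_p^{\phi_{\gamma,p,w}}}^p$; the low-frequency part $S_0\mathcal{G}_0u_0(t,\cdot)=S_0p(t,0,\cdot)\ast(S_0+\Delta_1)u_0$ is bounded by $N\|u_0\|$ uniformly in $t$, and integrating over $(0,T)$ against $w\,\mathrm{d}t$ accounts for the remaining contribution. (When $p=\infty$ one has $\phi_{\gamma,\infty,w}=\phi_\gamma$, and the statement reduces to $\sup_t\|p(t,0,\cdot)\ast u_0\|_{\Lambda^{\phi_\gamma}}\le N\|u_0\|_{\Lambda^{\phi_\gamma}}$, immediate from Theorem \ref{FSetimate} and almost orthogonality; the case $p\in(0,1)$ requires only notational changes, replacing norms by quasinorms.)

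\textbf{Where the difficulty lies.} For (i)--(ii) the crux is the reduction to the maximal operator: one must reorganize the heat potential so that the only $j$-dependence is a convolution against a dilate of a single nonincreasing $L_1$ kernel, and this must survive the supremum over frequencies built into $\|\cdot\|_{\Lambda^{\phi_\gamma}}$; after that the weighted $L_p$ and weak-$L_1$ theory for $\mathcal{M}$ is classical. For (iii) the heart is the doubling identity $\int_0^\infty e^{-ct2^{j\gamma}}w\simeq\int_0^{2^{-j\gamma}}w$, which is precisely what converts the time-decay factor $e^{-ct2^{j\gamma}}$ of the fundamental solution into the weight factor hidden inside $\phi_{\gamma,p,w}$; checking that the definition of $\phi_{\gamma,p,w}$ meshes with it and keeping track of the various frequency-localization (neighboring-block) comparabilities for $\phi$, $\phi_\gamma$, $\phi_{\gamma,p,w}$ and for $\int_0^{2^{-j\gamma}}w$ is the main bookkeeping burden. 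Throughout, the low-frequency blocks never decay in time and must be absorbed by hand through crude $\int_0^T$-type bounds.
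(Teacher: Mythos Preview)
Your proof is correct and follows essentially the same approach as the paper: Littlewood--Paley localization plus Theorem \ref{FSetimate} to reduce $\|\mathcal{G}f(t,\cdot)\|_{\Lambda^{\phi_\gamma}}$ to a convolution with the dilated exponential kernels $K_j$, then domination by $\mathcal{M}F$ and weighted maximal theory for (i)--(ii); and for (iii) the $(\sup_j)^p\le\sum_j$ trick together with $\int_0^\infty e^{-ct2^{j\gamma}}w(t)\,\mathrm{d}t\simeq W(2^{-j\gamma})$.

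The only differences are cosmetic. For the bound $K_j\ast F\lesssim\mathcal{M}F$ you invoke the standard ``nonincreasing $L_1$ majorant'' lemma, whereas the paper carries out an integration by parts on $\int_0^\infty k_j(s)F(s)\,\mathrm{d}s$ (Lemma \ref{23.12.05.14.49}); these are the same argument in different clothing. For the low-frequency block you keep $\int_0^T F$ as a separate constant and bound it via H\"older/$A_p$ (or $A_1$) afterwards, while the paper absorbs it into the maximal function in one stroke by noting that $\int_0^t F\le 2T\,\mathcal{M}F(t)$ for $t\in(0,T)$; the paper's route is slightly cleaner but yours is equally valid. For (iii) you prove the Laplace--$W$ comparison directly from doubling, while the paper cites it from \cite{Choi_Kim_Lee2023}.
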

The proof of Theorem \ref{23.11.26.16.09} will be provided in Section \ref{24.01.15.22.49}.

\subsection{Proof of Theorem \ref{23.12.05.09.53}}
In this subsection, we claim that the mild solution of \eqref{23.11.30.14.23} is a unique strong solution of \eqref{23.11.30.14.23}.

    \emph{Uniqueness.}
    Let $u$ be a strong solution to
    $$
    \begin{cases}
        \partial_tu(t,x)=\psi(t,-i\nabla)u(t,x),\quad (t,x)\in(0,T)\times\bR^d,\\
        u(0,x)=0.
    \end{cases}
    $$
    By Remark \ref{23.12.05.12.22} and
    the almost orthogonality of $\Delta_j$,
    \begin{align*}
    \Delta_ju(t,x)&=\int_0^t\int_{\bR^d}u(s,y)\bar{\psi}(s,-i\nabla)\Psi_j(x-y)\mathrm{d}y\mathrm{d}s\\
    &=\int_0^t\int_{\bR^d}\Delta_ju(s,y)\bar{\psi}(s,-i\nabla)(\Psi_{j-1}+\Psi_j+\Psi_{j+1})(x-y)\mathrm{d}y\mathrm{d}s.
    \end{align*}
    For every $j\in\bN$, by Minkowski's inequality and \eqref{23.12.27.18.38},
    \begin{align*}
        \|\Delta_ju(t,\cdot)\|_{L_{\infty}(\bR^d)}&\leq\sum_{k=-1}^{1}\int_0^t\|\bar{\psi}(s,-i\nabla)\Psi_{j+k}\|_{L_1(\bR^d)}\|\Delta_ju(s,\cdot)\|_{L_{\infty}(\bR^d)}\mathrm{d}s\\
        &\leq N2^{j\gamma}\int_0^t\|\Delta_ju(s,\cdot)\|_{L_{\infty}(\bR^d)}\mathrm{d}s.
    \end{align*} 
Due to Gr\"onwall's inequality, for all $t\in[0,T)$, $
\|\Delta_ju(t,\cdot)\|_{L_{\infty}(\bR^d)}=0$.
Similarly, we also have for all $t\in[0,T)$, $\|S_0u(t,\cdot)\|_{L_{\infty}(\bR^d)}=0$.
This implies that for $\varphi\in\cS(\bR^d)$
\begin{align*}
\left|\int_{\bR^d}u(t,x)\varphi(x)\mathrm{d}x\right|&=\left|\int_{\bR^d}u(t,x)\left(S_0\varphi(x)+\sum_{j=1}^{\infty}\Delta_j\varphi(x)\right)\mathrm{d}x\right|\\
&\leq \left|\int_{\bR^d}u(t,x)S_0\varphi(x)\mathrm{d}x\right|+\sum_{j=1}^{\infty}\left|\int_{\bR^d}u(t,x)\Delta_j\varphi(x)\mathrm{d}x\right|\\
&= \left|\int_{\bR^d}S_0u(t,x)\varphi(x)\mathrm{d}x\right|+\sum_{j=1}^{\infty}\left|\int_{\bR^d}\Delta_ju(t,x)\varphi(x)\mathrm{d}x\right|\\
&\leq \|\varphi\|_{L_1(\bR^d)}\left(\|S_0u(t,\cdot)\|_{L_{\infty}(\bR^d)}+\sum_{j=1}^{\infty}\|\Delta_ju(t,\cdot)\|_{L_{\infty}(\bR^d)}\right)=0
\end{align*}
for all $t\in[0,T)$.
By the fundamental lemma of the calculus of variations, $u=0$.
The uniqueness is proved.

\emph{Existence and estimates.}
Let $u$ be the mild solution of \eqref{23.11.30.14.23}.
By the definition of $\psi(s,-i\nabla)$,
\begin{align*}
    \int_0^t\psi(s,-i\nabla)u(s,x)\mathrm{d}s&=\int_0^t\int_{\bR^d}\psi(s,-i\nabla)p(s,0,x-y)u_0(y)\mathrm{d}y\mathrm{d}s\\
    &\quad+\int_0^t\int_0^s\int_{\bR^d}\psi(s,-i\nabla)p(s,r,x-y)f(r,y)\mathrm{d}y\mathrm{d}r\mathrm{d}s.
\end{align*}
Since $p$ is the fundamental solution, by the Fubini's theorem,
\begin{align*}
    &\int_0^t\int_{\bR^d}\psi(s,-i\nabla)p(s,0,x-y)u_0(y)\mathrm{d}y\mathrm{d}s+\int_0^t\int_0^s\int_{\bR^d}\psi(s,-i\nabla)p(s,r,x-y)f(r,y)\mathrm{d}y\mathrm{d}r\mathrm{d}s\\
    &=\int_0^t\int_{\bR^d}\partial_sp(s,0,x-y)u_0(y)\mathrm{d}y\mathrm{d}s+\int_0^t\int_0^s\int_{\bR^d}\partial_sp(s,r,x-y)f(r,y)\mathrm{d}y\mathrm{d}r\mathrm{d}s\\
    &=\int_0^t\partial_s\left(\int_{\bR^d}p(s,0,x-y)u_0(y)\mathrm{d}y\right)\mathrm{d}s+\int_0^t\int_r^t\partial_s\left(\int_{\bR^d}p(s,r,x-y)f(r,y)\mathrm{d}y\right)\mathrm{d}s\mathrm{d}r\\
    &=u(t,x)-\lim_{s\downarrow0}\int_{\bR^d}p(s,0,x-y)u_0(y)\mathrm{d}y-\lim_{s\downarrow r}\int_0^t\int_{\bR^d}p(s,r,x-y)f(r,y)\mathrm{d}y\mathrm{d}r.
\end{align*}
If we have
\begin{equation}
\label{23.12.03.14.20}
    \lim_{t\downarrow r}\int_{\bR^d}p(t,r,x-y)h(y)\mathrm{d}y=h(x)
\end{equation}
for $h\in \Lambda^{\phi}(\bR^d)$, then by the fundamental theorem of calculus, the existence is proved.

Since $h\in\Lambda^{\phi}(\bR^d)$, for given $\varepsilon>0$, there exists $\delta>0$ such that
$$
|x-y|<\delta\implies|h(x)-h(y)|<\varepsilon.
$$
This implies that
\begin{align*}
    &\left|\int_{\bR^d}p(t,r,x-y)h(y)\mathrm{d}y-h(x)\right|\\
    &\leq\left|\int_{|x-y|<\delta}p(t,r,x-y)h(y)\mathrm{d}y-h(x)\right|+\left|\int_{|x-y|\geq\delta}p(t,r,x-y)h(y)\mathrm{d}y\right|\\
    &\leq \int_{|x-y|<\delta}|p(t,r,x-y)||h(y)-h(x)|\mathrm{d}y+\left|\left(\int_{|x-y|<\delta}p(t,r,x-y)\mathrm{d}y-1\right)h(x)\right|\\
    &\quad +\|h\|_{L_{\infty}(\bR^d)}\int_{|x-y|\geq\delta}|p(t,r,x-y)|\mathrm{d}y\\
    &\leq \varepsilon\int_{|x-y|<\delta}|p(t,r,x-y)|\mathrm{d}y+\|h\|_{L_{\infty}(\bR^d)}\left|\int_{|x-y|<\delta}p(t,r,x-y)\mathrm{d}y-1\right|\\
    &\quad +\|h\|_{L_{\infty}(\bR^d)}\int_{|x-y|\geq\delta}|p(t,r,x-y)|\mathrm{d}y.
\end{align*}
By Theorem \ref{FSetimate},
\begin{align*}
    \int_{|x-y|\geq\delta}|p(t,r,x-y)|\mathrm{d}y&\leq \left(\int_{|y|\geq\delta}|y|^{-2d_2}\mathrm{d}y\right)^{1/2}\left(\int_{|y|\geq\delta_2}|y|^{2d_2}|p(t,r,y)|^2\mathrm{d}y\right)^{1/2}\\
    &\leq N\delta^{-\left(d_2-\frac{d}{2}\right)}(t-r)^{\frac{1}{\gamma}\left(d_2-\frac{d}{2}\right)},
\end{align*}
thus,
\begin{equation}
\label{24.01.25.17.34}
    \lim_{t\downarrow r}\int_{|x-y|\geq\delta}|p(t,r,x-y)|\mathrm{d}y=0.
\end{equation}
Since
$$
\left|\int_{|x-y|<\delta}p(t,r,x-y)\mathrm{d}y-1\right|\leq \left|\int_{\bR^d}p(t,r,x-y)\mathrm{d}y-1\right|+\int_{|x-y|\geq\delta}|p(t,r,x-y)|\mathrm{d}y,
$$
due to \eqref{23.12.03.14.42}, \eqref{24.01.25.17.34} and Theorem \ref{FSetimate},
$$
\limsup_{t\downarrow r}\left|\int_{\bR^d}p(t,r,x-y)h(y)\mathrm{d}y-h(x)\right|\leq N\varepsilon.
$$
Since $\varepsilon>0$ is an arbitrary positive constant, we have \eqref{23.12.03.14.20}.
Estimates \eqref{23.12.05.12.28} are obtained from Theorems \ref{23.11.14.14.54} and  \ref{23.11.26.16.09}.
The theorem is proved.\hfill $\square$

\subsection{Proof of Theorem \ref{23.11.26.16.09}}
\label{24.01.15.22.49}
First, we prove Theorem \ref{23.11.26.16.09}-$(i)$ and $(ii)$.
Recall that
$$
\mathcal{G}f(t,x):=\int_0^t\int_{\bR^d}p(t,s,x-y)f(s,y)\mathrm{d}y\mathrm{d}s,\quad \mathcal{G}_0u_0(t,x):=\int_{\bR^d}p(t,0,x-y)u_0(y)\mathrm{d}y.
$$
\begin{lem}
\label{23.12.05.14.49}
    There exists a constant $N=N(T)$ such that
    $$
    \|\cG f(t,\cdot)\|_{\Lambda^{\phi_{\gamma}}(\bR^d)}\leq N\cM(\|f(*,\cdot)\|_{\Lambda^{\phi}(\bR^d)}1_{(0,T)}(*))(t).
    $$
\end{lem}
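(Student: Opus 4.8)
The plan is to run the argument through the Littlewood--Paley characterization \eqref{24.01.15.22.40} of $\Lambda^{\phi_{\gamma}}(\bR^d)$: I would estimate the low-frequency piece $S_0(\cG f(t,\cdot))$ and the dyadic pieces $\Delta_j(\cG f(t,\cdot))$, $j\geq 1$, separately, and in each case bound the relevant quantity pointwise in $t$ by $\cM\big(\|f(*,\cdot)\|_{\Lambda^{\phi}(\bR^d)}1_{(0,T)}(*)\big)(t)$. Throughout I write $g(s):=\|f(s,\cdot)\|_{\Lambda^{\phi}(\bR^d)}$ and fix $t\in(0,T)$.

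For a fixed $j\geq 1$, Fubini's theorem and the almost orthogonality $\Delta_j=\Delta_j(\Delta_{j-1}+\Delta_j+\Delta_{j+1})$ of the projections let me write $\Delta_j(\cG f(t,\cdot))$ as the time integral of the convolutions $\Delta_j p(t,s,\cdot)\ast(\Delta_{j-1}+\Delta_j+\Delta_{j+1})f(s,\cdot)$. Young's inequality, the first estimate of Theorem \ref{FSetimate} in the form $\|\Delta_j p(t,s,\cdot)\|_{L_1(\bR^d)}\leq N e^{-c(t-s)2^{j\gamma}}$ with $c=\kappa(1-\delta)/2^{\gamma}$, and the bound $\|\Delta_{j+k}f(s,\cdot)\|_{L_\infty(\bR^d)}\leq \phi(2^{j+k})^{-1}g(s)$ coming from \eqref{24.01.15.22.40}, then yield, after using $\phi_{\gamma}(2^j)=\phi(2^j)2^{j\gamma}$ and the two-sided doubling $\phi(2^{j+k})\simeq\phi(2^j)$ for $|k|\leq 1$ (valid since $\phi\in\cI_o(0,L)$),
\begin{equation*}
\phi_{\gamma}(2^j)\,\|\Delta_j(\cG f(t,\cdot))\|_{L_\infty(\bR^d)}\leq N\,2^{j\gamma}\int_0^t e^{-c(t-s)2^{j\gamma}}g(s)\,\mathrm{d}s .
\end{equation*}
The key point is the scale-invariant subordination inequality: for every $a>0$ and every nonnegative locally integrable $h$ on $\bR$,
\begin{equation*}
a\int_{-\infty}^t e^{-ca(t-s)}h(s)\,\mathrm{d}s\leq N(c)\,\cM h(t),
\end{equation*}
which one proves by decomposing $(-\infty,t)$ into the intervals $[t-(k+1)/a,\,t-k/a)$, bounding $e^{-ca(t-s)}\leq e^{-ck}$ and $\int_{t-(k+1)/a}^{t}h\lesssim (k+1)a^{-1}\cM h(t)$ on the $k$-th interval, and summing the convergent series $\sum_{k\geq 0}(k+1)e^{-ck}$. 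Applying this with $a=2^{j\gamma}$ and $h=g1_{(0,T)}$ bounds the right-hand side above by $N\,\cM\big(g1_{(0,T)}\big)(t)$, uniformly in $j\geq 1$.

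For the low-frequency term I would argue similarly but more crudely: almost orthogonality writes $S_0(\cG f(t,\cdot))$ as the time integral of $S_0 p(t,s,\cdot)\ast(S_0+\Delta_1)f(s,\cdot)$, and since $\|S_0 p(t,s,\cdot)\|_{L_1(\bR^d)}\leq N$ by Theorem \ref{FSetimate} while $\|(S_0+\Delta_1)f(s,\cdot)\|_{L_\infty(\bR^d)}\leq N g(s)$ by \eqref{24.01.15.22.40}, I get $\|S_0(\cG f(t,\cdot))\|_{L_\infty(\bR^d)}\leq N\int_0^t g(s)\,\mathrm{d}s$. Because $t\in(0,T)$, the interval $(0,t)$ is contained in $(t-T,t+T)$, which has length $2T$, so $\int_0^t g\leq 2T\,\cM\big(g1_{(0,T)}\big)(t)$; this is exactly where the dependence $N=N(T)$ enters. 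Combining the two estimates and invoking \eqref{24.01.15.22.40} completes the proof. The only step that needs any real care is the subordination inequality above, and even that is a routine dyadic decomposition of the exponential kernel; the remainder is bookkeeping with Theorem \ref{FSetimate} and the square-function norm.
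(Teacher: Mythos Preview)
Your proposal is correct and follows essentially the same route as the paper: Littlewood--Paley characterization, almost orthogonality to reduce to $\|\Delta_j p(t,s,\cdot)\|_{L_1}$ and $\|S_0 p(t,s,\cdot)\|_{L_1}$, Theorem~\ref{FSetimate} for the kernel bounds, and then domination of the resulting time integral by the maximal function, with the $T$-dependence arising only from the low-frequency piece. The one minor technical difference is in how the key inequality $a\int_0^\infty e^{-cas}F(s)\,\mathrm{d}s\leq N\,\cM F(0)$ is established: you do it by a dyadic decomposition of the exponential kernel and summing $\sum_{k\ge0}(k+1)e^{-ck}$, whereas the paper truncates the kernel, integrates by parts to throw the derivative onto $k_j$, and uses $\int_0^s F\leq 2s\,\cM F(0)$ together with $\int_0^\infty k_j<\infty$; both are standard and yield the same $j$-uniform constant.
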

\begin{proof}
    By Theorem \ref{FSetimate} and the almost orthogonality of the Littlewood-Paley projections,
    \begin{align*}
        &\|\cG f(t,\cdot)\|_{\Lambda^{\phi_{\gamma}}(\bR^d)}\\
        &\leq \int_{0}^t\|S_0(p(t,s,\cdot)\ast f(s,\cdot))\|_{L_{\infty}(\bR^d)}\mathrm{d}s+\sup_{j\in\bN}\phi(2^{j})2^{j\gamma}\int_{0}^{t}\|\Delta_j(p(t,s,\cdot)\ast f(s,\cdot))\|_{L_{\infty}(\bR^d)}\mathrm{d}s\\
        &\leq 
        \int_0^t\|(S_0+\Delta_1)p(t,s,\cdot)\|_{L_1(\bR^d)}\|f(s,\cdot)\|_{\Lambda^{\phi}(\bR^d)}\mathrm{d}s\\
        &\quad+\sup_{j\in\bN}2^{j\gamma}\int_{0}^{t}\|(\Delta_{j-1}+\Delta_{j}+\Delta_{j+1})p(t,s,\cdot)\|_{L_1(\bR^d)}\|f(s,\cdot)\|_{\Lambda^{\phi}(\bR^d)}\mathrm{d}s\\
        &\leq N\int_0^t\|f(s,\cdot)\|_{\Lambda^{\phi}(\bR^d)}\mathrm{d}s +N\sup_{j\in\bN}2^{j\gamma}\int_{0}^{t}\mathrm{e}^{-\kappa |t-s|2^{j\gamma}\times\frac{(1-\delta)}{2^{\gamma}}}\|f(s,\cdot)\|_{\Lambda^{\phi}(\bR^d)}\mathrm{d}s\\
        &\leq N(T)\cM(\|f(*,\cdot)\|_{\Lambda^{\phi}(\bR^d)}1_{(0,T)}(*))(t)+N\sup_{j\in\bN}\int_0^tk_j(t-s)\|f(s,\cdot)\|_{\Lambda^{\phi}(\bR^d)}\mathrm{d}s,
    \end{align*}
where $k_j(t):=2^{j\gamma}\mathrm{e}^{-\kappa|t|2^{j\gamma}\times\frac{(1-\delta)}{2^{\gamma}}}$.
Let $k_{j,n}(t):=k_j(t)\eta_n(t)$ where $\eta_n$ is a nonnegative infinitely differentiable function satisfying $\eta_n(t)=1$ for $0\leq t\leq n-1$ and $\eta_n(t)=0$ for $t\geq n$.
Then due to the integration by parts formula, for any nonnegative function $F$,
\begin{align*}
    \int_0^{\infty}k_{j,n}(s)F(s)\mathrm{d}s&=-\int_{0}^{n}k_{j,n}'(s)\left(\int_0^sF(r)\mathrm{d}r\right)\mathrm{d}s\\
    &\leq 2\cM F(0) \int_0^ns(-k_{j,n}'(s))\mathrm{d}s\\
    &=2\cM F(0)\int_0^nk_{j,n}(s)\mathrm{d}s.
\end{align*}
By the monotone convergence theorem,
\begin{equation}
\label{24.02.03.17.24}
    \int_0^{\infty}k_j(s)F(s)\mathrm{d}s\leq N \cM F(0).
\end{equation}
Since
$$
\sup_{j\in\bN}\int_0^{\infty}k_j(t)\mathrm{d}t<\infty,
$$
the constant $N$ in \eqref{24.02.03.17.24} is independent of $j$.
By putting $F(s):=\|f(t-s,\cdot)\|_{\Lambda^{\phi}(\bR^d)}1_{(0,T)}(t-s)$, we have
$$
\cM F(0)=\cM(\|f(*,\cdot)\|_{\Lambda^{\phi}}1_{(0,T)}(*))(t).
$$
The lemma is proved.
\end{proof}

\begin{proof}[Proof of Theorem \ref{23.11.26.16.09}]
$(i)$, $(ii)$
It is well known that (\textit{e.g.} \cite[Theorem 7.1.9]{grafakos2014classical})
\begin{equation}
\label{23.12.05.14.47}
    \|\mathcal{M}\|_{L_{1}(\bR,w\,\mathrm{d}t)\to L_{1,\infty}(\bR,w\,\mathrm{d}t)}+\|\mathcal{M}\|_{L_p(\bR,w\,\mathrm{d}t)\to L_p(\bR,w\,\mathrm{d}t)}1_{p\in(1,\infty)}\leq C(p,[w]_{A_p(\bR)}).
\end{equation}
It can be also easily checked that
\begin{equation}
\label{23.12.05.14.48}
    \|\cM\|_{L_{\infty}(\bR)\to L_{\infty}(\bR)}\leq 1.
\end{equation}
By combining \eqref{23.12.05.14.47}, \eqref{23.12.05.14.48} and Lemma \ref{23.12.05.14.49}, $(i)$ and $(ii)$ are proved.

$(iii)$ By Theorem \ref{23.11.14.14.54} and the almost orthogonality of the Littlewood-Paley projections,
    \begin{align*}
        &\|\cG_0 u_0(t,\cdot)\|_{\Lambda^{\phi_{\gamma}}(\bR^d)}\\
        &= \|S_0(p(t,0\cdot)\ast u_0)\|_{L_{\infty}(\bR^d)}+\sup_{j\in\bN}\phi(2^{j})2^{j\gamma}\|\Delta_j(p(t,0,\cdot)\ast u_0)\|_{L_{\infty}(\bR^d)}\\
        &\leq N\|u_0\|_{\Lambda_p^{\phi_{\gamma,w,p}}(\bR^d)}+N\sup_{j\in\bZ}\phi(2^j)2^{j\gamma}\|(\Delta_{j-1}+\Delta_{j}+\Delta_{j+1})p(t,0,\cdot)\|_{L_1(\bR^d)}\|\Delta_ju_0\|_{L_{\infty}(\bR^d)}\\
        &\leq N\|u_0\|_{\Lambda_p^{\phi_{\gamma,w,p}}(\bR^d)}+N\sup_{j\in\bZ}\phi(2^j)2^{j\gamma}\mathrm{e}^{-N(\delta,\gamma,\kappa) t2^{j\gamma}}\|\Delta_ju_0\|_{L_{\infty}(\bR^d)}.
    \end{align*}
    If $p=\infty$, then $\phi_{\gamma,w,p}(\lambda)=\phi_{\gamma}(\lambda)$ and
    $$
    \sup_{j\in\bZ}\phi(2^j)2^{j\gamma}\mathrm{e}^{-N t2^{j\gamma}}\|\Delta_ju_0\|_{L_{\infty}(\bR^d)}\leq N(\delta,\gamma,\kappa)\|u_0\|_{\Lambda^{\phi_{\gamma}}(\bR^d)}.
    $$
    We complete the proof by showing if $p\in(0,\infty)$, then
    \begin{equation}
    \label{23.12.05.14.58}
        \int_0^{\infty}\left(\sup_{j\in\bZ}\phi(2^j)2^{j\gamma}\mathrm{e}^{-N t2^{j\gamma}}\|\Delta_ju_0\|_{L_{\infty}(\bR^d)}\right)^{p}w(t)\mathrm{d}t\leq N\|u_0\|_{\Lambda_{p}^{\phi_{\gamma,w,p}}(\bR^d)}^p.
    \end{equation}
    It can be easily checked that
    \begin{align*}
        &\int_0^{\infty}\left(\sup_{j\in\bZ}\phi(2^j)2^{j\gamma}\mathrm{e}^{-N t2^{j\gamma}}\|\Delta_ju_0\|_{L_{\infty}(\bR^d)}\right)^{p}w(t)\mathrm{d}t\\
        &\leq \int_0^{\infty}\sup_{j\in\bZ}\phi(2^j)^p2^{jp\gamma}\mathrm{e}^{-N t2^{j\gamma}}\|\Delta_ju_0\|_{L_{\infty}(\bR^d)}^pw(t)\mathrm{d}t\\
        &\leq \sum_{j\in\bZ}\phi(2^j)^p2^{jp\gamma}\|\Delta_ju_0\|_{L_{\infty}(\bR^d)}^p\int_0^{\infty}\mathrm{e}^{-N t2^{j\gamma}}w(t)\mathrm{d}t\\
        &=\sum_{j\in\bZ}\phi(2^j)^p2^{jp\gamma}\cL[w](N2^{j\gamma})\|\Delta_ju_0\|_{L_{\infty}(\bR^d)}^p,
    \end{align*}
where $N=N(\delta,\gamma,\kappa,p)$ and $\mathcal{L}[w](\lambda):=\int_0^{\infty}\mathrm{e}^{-\lambda t}w(t)\mathrm{d}t$.
By \cite[Lemma 3.1]{Choi_Kim_Lee2023}, for any $w\in A_{\infty}(\bR)$,
    \begin{equation*}
        \mathcal{L}[w](N2^{j\gamma})\simeq \int_0^{2^{-j\gamma}}w(t)\mathrm{d}t=W(2^{-j\gamma}).
    \end{equation*}
The theorem is proved.   
\end{proof}

\mysection{Embedding theorems: Proofs of Theorem \ref{24.02.03.17.37}}
\label{24.02.03.18.24}
In this section, we focus on proving Theorem \ref{24.02.03.17.37}.
To facilitate this, we introduce the following lemma, which characterizes the generalized real interpolation space.
This lemma plays a crucial role in the proof of Theorem \ref{24.02.03.17.37}.
For further properties and a detailed exploration of the generalized real interpolation space, readers are referred to \cite{CLSW2023}.

\begin{lem}
\label{24.01.15.23.12}
    Let $\gamma\in(0,\infty)$, $\phi\in\cI_o(0,L)$, $p\in(1,\infty)$ and $w\in A_p(\bR)$.
    Then 
    $$
    (\Lambda^{\phi_{\gamma}}(\bR^d),\Lambda^{\phi}(\bR^d))_{W^{1/p},p}=\Lambda_p^{\phi_{\gamma,p,w}}(\bR^d),
    $$
    where $(\Lambda^{\phi_{\gamma}}(\bR^d),\Lambda^{\phi}(\bR^d))_{W^{1/p},p}$ is the real interpolation space between $\Lambda^{\phi_{\gamma}}(\bR^d)$ and $\Lambda^{\phi}(\bR^d)$.
    The norm of $(\Lambda^{\phi_{\gamma}}(\bR^d),\Lambda^{\phi}(\bR^d))_{W^{1/p},p}$ is defined by
    $$
    \|f\|_{(\Lambda^{\phi_{\gamma}}(\bR^d),\Lambda^{\phi}(\bR^d))_{W^{1/p},p}}:=\int_{0}^{\infty}W(t^{-1})K(t,f;\Lambda^{\phi_{\gamma}}(\bR^d),\Lambda^{\phi}(\bR^d))^p\frac{\mathrm{d}t}{t},
    $$
    and $W(t):=\int_0^tw(s)\mathrm{d}s$.
    Here, the $K$-functional is defined by
    $$
    K(t,f;\Lambda^{\phi_{\gamma}}(\bR^d),\Lambda^{\phi}(\bR^d)):=\inf(\|h_0\|_{\Lambda^{\phi_{\gamma}}(\bR^d)}+t\|h_1\|_{\Lambda^{\phi}(\bR^d))}),
    $$
    where the infimum is taken over all pairs $(h_0,h_1)$ satisfying
    $$f(x)=h_0(x)+h_1(x),\quad h_0\in\Lambda^{\phi_{\gamma}}(\bR^d),\quad h_1\in\Lambda^{\phi}(\bR^d).
    $$
\end{lem}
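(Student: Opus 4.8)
The plan is to identify the $K$-functional $K(t,f;\Lambda^{\phi_\gamma},\Lambda^\phi)$ explicitly (up to equivalence) using the Littlewood-Paley characterization \eqref{24.01.15.22.40}, and then to evaluate the weighted integral defining $\|\cdot\|_{(\Lambda^{\phi_\gamma},\Lambda^\phi)_{W^{1/p},p}}$ against the known expression for $W$. Recall that for $f$ with Littlewood-Paley pieces $\Delta_j f$, the natural candidate decomposition $f = h_0 + h_1$ is the spectral split: given a threshold index $j_0$, put the low-frequency part (say $S_0 f + \sum_{1\le j\le j_0}\Delta_j f$) into $h_0\in\Lambda^{\phi_\gamma}$ and the high-frequency tail $\sum_{j>j_0}\Delta_j f$ into $h_1\in\Lambda^\phi$. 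First I would show the \emph{upper bound}
\[
K(t,f;\Lambda^{\phi_\gamma},\Lambda^\phi)\lesssim \sup_{j\in\bZ}\Bigl(\min\bigl(\phi_\gamma(2^j),\,t\,\phi(2^j)\bigr)\,\|\Delta_j f\|_{L_\infty}\Bigr)+\|S_0 f\|_{L_\infty}
\]
by choosing $j_0 = j_0(t)$ to be the index where $\phi_\gamma(2^{j})\simeq t\,\phi(2^{j})$, i.e. where $2^{j\gamma}\simeq t^{-1}$ (using $\phi_\gamma=\phi\cdot\lambda^\gamma$); for this split one estimates $\|h_0\|_{\Lambda^{\phi_\gamma}}$ and $t\|h_1\|_{\Lambda^\phi}$ termwise, noting that $\phi_\gamma$ is, up to the factor $2^{j\gamma}$, increasing fast enough (because $\phi\in\cI_o(0,L)$ controls the growth of $\phi$) that the sup over $j\le j_0$ of $\phi_\gamma(2^j)\|\Delta_j f\|_{L_\infty}$ is comparable to the single relevant term and similarly for the tail. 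Conversely, for the \emph{lower bound}, for \emph{any} admissible decomposition $f=h_0+h_1$ one has $\Delta_j f=\Delta_j h_0+\Delta_j h_1$, hence $\phi_\gamma(2^j)\|\Delta_j f\|_{L_\infty}\le \phi_\gamma(2^j)\|\Delta_j h_0\|_{L_\infty}+\phi_\gamma(2^j)\|\Delta_j h_1\|_{L_\infty}\le \|h_0\|_{\Lambda^{\phi_\gamma}}+t\phi(2^j)\|h_1\|_{\Lambda^\phi}\cdot\frac{\phi_\gamma(2^j)}{t\phi(2^j)}$, and taking the minimum of the two ways of bounding yields $\min(\phi_\gamma(2^j),t\phi(2^j))\|\Delta_j f\|_{L_\infty}\lesssim K(t,f)$; an analogous elementary argument handles the $S_0$ term. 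Combining the two bounds gives
\[
K(t,f;\Lambda^{\phi_\gamma},\Lambda^\phi)\simeq \|S_0 f\|_{L_\infty}+\sup_{j\ge 1}\min\bigl(\phi_\gamma(2^j),\,t\,\phi(2^j)\bigr)\|\Delta_j f\|_{L_\infty}.
\]

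Next I would insert this equivalence into the defining integral and interchange the (increasing) sup over $j$ with the integral $\int_0^\infty W(t^{-1})(\cdots)^p\frac{dt}{t}$ — more precisely, split the $t$-integral at $t_j$ where $\phi_\gamma(2^j)=t_j\phi(2^j)$, i.e. $t_j\simeq 2^{-j\gamma}$, so that for $t<t_j$ the minimum equals $t\phi(2^j)$ and for $t>t_j$ it equals $\phi_\gamma(2^j)$. On $(0,t_j)$ the factor $t^p$ makes the integral $\int_0^{t_j}W(t^{-1})t^{p}\frac{dt}{t}$ converge and, since $W$ is slowly varying / doubling for $A_p$ weights, it is comparable to $t_j^p\,W(t_j^{-1})$; on $(t_j,\infty)$ the integral $\int_{t_j}^\infty W(t^{-1})\frac{dt}{t}$ is comparable to $\int_0^{t_j^{-1}} W(s)\frac{ds}{s}\cdot(\text{bdd})$... here one uses that $W(s)=\int_0^s w$ and the $A_p$ condition to get $\int_{t_j}^\infty W(t^{-1})\frac{dt}{t}\lesssim W(t_j^{-1})$ as well (the $A_p$/doubling property of $W$ prevents the tail from blowing up). The upshot is that the $j$-th contribution is $\simeq \phi_\gamma(2^j)^p\,W(2^{j\gamma})\,\|\Delta_j f\|_{L_\infty}^p = \phi_{\gamma,p,w}(2^j)^p\|\Delta_j f\|_{L_\infty}^p$, recalling the definition $\phi_{\gamma,p,w}(\lambda)=\phi(\lambda)\lambda^\gamma(\int_0^{\lambda^{-\gamma}}w)^{1/p}=\phi_\gamma(\lambda)W(\lambda^{-\gamma})^{1/p}$. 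Summing over $j\ge 1$ and adding the $\|S_0 f\|_{L_\infty}^p$ term reproduces exactly the Littlewood-Paley norm \eqref{24.01.15.22.40} for $\Lambda_p^{\phi_{\gamma,p,w}}(\bR^d)$, which is what we want; one should also check the membership hypothesis $\phi_{\gamma,p,w}\in\cI_o(0,\cdot)$ so that this characterization applies, which follows from $\phi\in\cI_o(0,L)$, $\gamma>0$, and the $A_\infty$-bounds on $W$ quoted later in the paper.

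The main obstacle I anticipate is making the interchange of $\sup_j$ with the integral and the two one-sided integral estimates fully rigorous \emph{uniformly in $f$}, i.e. controlling the overlap/interaction between different frequency blocks when passing from $\sup_j$ inside the integral to $\bigl(\sum_j\bigr)^{1/p}$ outside. The clean way is: for the direction $\|f\|_{(\cdots)}\lesssim\|f\|_{\Lambda_p^{\phi_{\gamma,p,w}}}$, bound the sup by the $\ell^p$-sum and integrate each term (straightforward); for the reverse direction one localizes — for $t\in(t_{j},t_{j-1})$ the sup over $j'$ of $\min(\phi_\gamma(2^{j'}),t\phi(2^{j'}))\|\Delta_{j'}f\|_{L_\infty}$ is $\gtrsim \phi_\gamma(2^{j})\|\Delta_j f\|_{L_\infty}$ (take $j'=j$), so integrating over that dyadic $t$-window already recovers the $j$-th term of the target sum, and these windows are disjoint, giving the $\sum_j$ for free. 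The role of the weight enters only through the equivalence $\int_{t_j}^{t_{j-1}} W(t^{-1})\frac{dt}{t}\simeq W(2^{j\gamma})$ (up to constants depending on $[w]_{A_p}$ and $\gamma$), which is where the $A_p$/doubling hypothesis on $w$ is essential and where most of the technical care will go. Everything else is bookkeeping with the Littlewood-Paley norm and the elementary properties of $\phi\in\cI_o(0,L)$.
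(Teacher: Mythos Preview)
Your approach is correct and closely parallels the paper's, with the same core ingredients: identify the $K$-functional via Littlewood--Paley as $\sup_n \min(\phi_\gamma(2^n),t\phi(2^n))\|\Delta_n f\|_{L_\infty}$, then split the $t$-integral dyadically and use the two-sided $A_p$ growth bounds on $W$. Two packaging differences are worth noting. First, the paper does not work in $\Lambda^\phi$ directly but invokes a retract argument (Triebel, together with Kurtz--Wheeden) to reduce the statement to the sequence-space identity $(\ell_\infty^{\phi_\gamma}(A),\ell_\infty^{\phi}(A))_{W^{1/p},p}=\ell_p^{\phi_{\gamma,p,w}}(A)$; you bypass this reduction and argue in the function space, which is fine and arguably more self-contained. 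Second, for the harder inclusion $\Lambda_p^{\phi_{\gamma,p,w}}\subset(\cdot,\cdot)_{W^{1/p},p}$ you propose to bound $(\sup_n)^p$ by $\sum_n(\cdot)^p$ and integrate each term against $W(t^{-1})\,dt/t$; the paper instead passes through the $\ell_1$-interpolation couple (where the $K$-functional is a \emph{sum}), applies H\"older with a small auxiliary exponent $\chi$ to control the cross terms, and then sandwiches $\ell_p^{\phi_{\gamma,p,w}}\subset(\ell_1,\ell_1)_{W^{1/p},p}\subset(\ell_\infty,\ell_\infty)_{W^{1/p},p}\subset\ell_p^{\phi_{\gamma,p,w}}$. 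Your route is more elementary here and works because the single-term integral $\int_0^\infty W(t^{-1})\min(2^{n\gamma},t)^p\,\frac{dt}{t}$ is indeed $\simeq 2^{n\gamma p}W(2^{-n\gamma})$ under the $A_p$ bounds on $W$ you invoke. (One small slip: the threshold should be $2^{j_0\gamma}\simeq t$, not $t^{-1}$, since $\phi_\gamma(2^{j_0})=t\phi(2^{j_0})$ reads $2^{j_0\gamma}=t$.)
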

\begin{proof}
    Let $A$ be a Banach space and $\ell_{\infty}^{\phi}(A)$ be a space of $A$-valued sequences $a=\{a_{n}\}_{n\in\bZ}$ satisfying
    $$
    \|a\|_{\ell_{\infty}^{\phi}(A)}:=\sup_{n\in\bZ}\phi(2^n)\|a_n\|_{A}<\infty.
    $$
    Due to \cite[Theorem 2.4.2/(a)]{triebel1978interpolation} with \cite[Theorem 1]{kurtz1979results},
    it suffices to prove only
    $$
    (\ell_{\infty}^{\phi_{\gamma}}(A),\ell_{\infty}^{\phi}(A))_{W^{1/p},p}=\ell_{p}^{\phi_{\gamma,p,w}}(A).
    $$
    We follow the proof of \cite[Proposition A.4]{CLSW2023trace}.

    \emph{Step 1.} We first prove that
    $$
    (\ell_{\infty}^{\phi_{\gamma}}(A),\ell_{\infty}^{\phi}(A))_{W^{1/p},p}\subset\ell_{p}^{\phi_{\gamma,p,w}}(A).
    $$
    Note that for a $A$-valued sequence $a=\{a_n\}_{n\in\bZ}$,
    \begin{align*}
    K(t,a;\ell_{\infty}^{\phi_{\gamma}}(A),\ell_{\infty}^{\phi}(A))&=\inf_{a=b+c}(\|b\|_{\ell_{\infty}^{\phi_{\gamma}}(A)}+t\|c\|_{\ell_{\infty}^{\phi}(A)})\\
    &\simeq \sup_{n\in\bZ}\min(\phi(2^n)2^{n\gamma},t\phi(2^n))\|a_n\|_{A}.
    \end{align*}
    Hence
    \begin{equation*}\label{ineq-221012 1340}
	\begin{aligned}
		\| a \|_{(\ell_{\infty}^{\phi_{\gamma}}(A),\ell_{\infty}^{\phi}(A))_{W^{1/p}, p}}^p
		&= \int_0^\infty  W(t^{-1}) K(t, a;  \ell_{\infty}^{\phi_{\gamma}}(A),\ell_{\infty}^{\phi}(A))^p \frac{\mathrm{d}t}{t}\\
		&\simeq \int_0^\infty  W(t^{-1}) \left(\sup_{n\in\mathbb{Z}} \left(\min(\phi(2^n)2^{n\gamma},t\phi(2^n))\|a_n\|_{A} \right)\right)^p \frac{\mathrm{d}t}{t}\\
		&= \sum_{j\in\mathbb{Z}} \int_{2^{j\gamma}}^{2^{(j+1)\gamma}}  W(t^{-1}) \left(\sup_{n\in\mathbb{Z}} \left(\min(\phi(2^n)2^{n\gamma},t\phi(2^n))\|a_n\|_{A} \right)\right)^p
		\frac{\mathrm{d}t}{t}\\
		&\simeq \sum_{j\in\mathbb{Z}} W(2^{-j\gamma})   \sup_{n\in\mathbb{Z}} \left(\min(\phi(2^n)2^{n\gamma}, 2^{j\gamma}\phi(2^{n})) \|a_n\|_{A}\right)^p\\
		&\geq \sum_{j\in\mathbb{Z}} W(2^{-j\gamma}) \phi(2^j)^p2^{jp\gamma} \|a_j\|_{A}^p=\|a\|_{\ell_p^{\phi_{\gamma,p,w}}(A)}.
	\end{aligned}
	\end{equation*}

\emph{Step 2.} In this step, we prove that
\begin{align}\label{left_ineq}
		\ell_p^{\phi_{\gamma,p,w}}(A) \subset (\ell_1^{\phi_{\gamma}}(A), \ell_1^{\phi}(A))_{W^{1/p}, p}.
	\end{align} 
Note that 
	\begin{align*}
		K(t, a; \ell_1^{\phi_{\gamma}}(A), \ell_1^{\phi}(A)) \simeq \sum_{n\in\mathbb{Z}} \min(\phi(2^{n})2^{n\gamma}, t\phi(2^{n})) \|a_n\|_{A}.
	\end{align*}
 
Then for $q\in[1,\infty)$, we have
	\begin{equation}\label{230706542}
\begin{aligned}
  &\| a \|_{(\ell_1^{\phi_{\gamma}}(A), \ell_1^{\phi}(A))_{W^{1/p}, p}}^p=\int_0^\infty W(t^{-1})K(t, a;  \ell_1^{\phi_{\gamma}}(A), \ell_1^{\phi}(A))^p \frac{\mathrm{d}t}{t}\\
		&\simeq \sum_{j\in\mathbb{Z}} \int_{2^{j\gamma}}^{2^{(j+1)\gamma}} W(t^{-1})\left(  \sum_{n\in\mathbb{Z}} \min(\phi(2^{n})2^{n\gamma}, t\phi(2^{n})) \|a_n\|_{A}\right)^p \frac{\mathrm{d}t}{t}\\
		&\simeq \sum_{j\in\mathbb{Z}} W(2^{-j\gamma})   \left( \sum_{n\in\mathbb{Z}} \min(\phi(2^{n\gamma})2^{n\gamma}, \phi(2^{n})2^{j\gamma}) \|a_n\|_{A}\right)^p\\
		&\lesssim  \sum_{j\in\mathbb{Z}} W(2^{-j\gamma})  \left( \left( \sum_{n \leq j}\phi(2^n)2^{n\gamma} \|a_n\|_{A} \right)^p+  \left(\sum_{n>j} \phi(2^{n})2^{j\gamma} \|a_n\|_{A}\right)^p \right)\\
		&=: I + II.
	\end{aligned}
 \end{equation}
We handle terms $I$ and $II$ separately.

\emph{Step 2-1.} For $\chi>0$, by H\"older's inequality,
\begin{align*}
\left(\sum_{n\leq j}\phi(2^n)2^{n\gamma}\|a_n\|_{A}\right)^p&\leq \left(\sum_{n\leq j}2^{n\chi p'}\right)^{p-1}\left(\sum_{n\leq j}\phi(2^n)^p2^{n(\gamma-\chi) p}\|a_n\|_{A}^p\right)\\
&=N2^{j\chi p}\sum_{n\leq j}\phi(2^n)2^{n(\gamma-\chi)p}\|a_n\|_{A}^p.
\end{align*}
By Fubini's theorem,
\begin{align*}
    I\leq \sum_{n\in\bZ}\phi(2^n)2^{n(\gamma-\chi)p}\|a_n\|_{A}^p\sum_{n\leq j}W(2^{-j\gamma})2^{j\chi p}.
\end{align*}
Since there exists $\delta\in(0,1)$ such that
$$
W(2^{-j\gamma})\leq N2^{-(j-n)\delta\gamma}W(2^{-n\gamma}),\quad \forall j\geq n,
$$
if we take $\chi\in(0,\delta\gamma/p)$, then
$$
I\leq N\|a\|_{\ell_p^{\phi_{\gamma,p,w}}(A)}^p.
$$

\emph{Step 2-2.} For $\chi>0$, by H\"older's inequality,
\begin{align*}
\left(\sum_{n< j}\phi(2^n)\|a_n\|_{A}\right)^p&\leq \left(\sum_{n> j}2^{-n\chi p'}\right)^{p-1}\left(\sum_{n> j}\phi(2^n)^p2^{n\chi p}\|a_n\|_{A}^p\right)\\
&=N2^{-j\chi p}\sum_{n> j}\phi(2^n)^p2^{n\chi p}\|a_n\|_{A}^p.
\end{align*}
By Fubini's theorem,
\begin{align*}
    II\leq \sum_{n\in\bZ}\phi(2^n)^p2^{n\chi p}\|a_n\|_{A}^p\sum_{n> j}W(2^{-j\gamma})2^{j(\gamma-\chi)p}.
\end{align*}
Since there exists $\delta\in(0,1)$ such that
$$
W(2^{-j\gamma})\leq N2^{-(j-n)\gamma (p-\delta)}W(2^{-n\gamma}),\quad \forall j<n,
$$
if we take $\chi\in(0,\delta\gamma/p)$, then
$$
II\leq N\|a\|_{\ell_p^{\phi_{\gamma,p,w}}(A)}^p.
$$

Since $\ell_{1}^{\phi_{\gamma}}(A)\subset \ell_{\infty}^{\phi_{\gamma}}(A)$ and $\ell_{1}^{\phi}(A)\subset \ell_{\infty}^{\phi},(A)$ by combining the results in Step $1$ and $2$, we have
\begin{equation*}
	\begin{aligned}
		\ell_p^{\phi_{\gamma,p,w}}(A) 
		&\subseteq (\ell_1^{\phi_{\gamma}}(A), \ell_1^{\phi}(A))_{W^{1/p}, p}\\
		&\subseteq (\ell_\infty^{\phi_{\gamma}}(A), \ell_\infty^{\phi}(A))_{W^{1/p}, p}
		\subseteq \ell_p^{\phi_{\gamma,p,w}}(A).
	\end{aligned}
	\end{equation*}
The lemma is proved.
\end{proof}

We end this section by proving Theorem \ref{24.02.03.17.37}.
\begin{proof}[Proof of Theorem \ref{24.02.03.17.37}]

    $(i)$ The left inequality in \eqref{24.01.15.23.11} is easily obtained from
    \begin{align*}
    \|u(t,\cdot)\|_{\Lambda^{\phi_{\gamma,p,w}}(\bR^d)}&=\|S_0u(t,\cdot)\|_{L_{\infty}(\bR^d)}+\sup_{j\in\bN}\phi_{\gamma,p,w}(2^j)\|\Delta_ju(t,\cdot)\|_{L_{\infty}(\bR^d)}\\
    &\leq\|S_0u(t,\cdot)\|_{L_{\infty}(\bR^d)}+\left(\sum_{j\in\bN}\phi_{\gamma,p,w}(2^j)^p\|\Delta_ju(t,\cdot)\|_{L_{\infty}(\bR^d)}^p\right)^{1/p}\\
    &=\|u(t,\cdot)\|_{\Lambda_p^{\phi_{\gamma,p,w}}(\bR^d)}.
    \end{align*}
    For the right inequality in \eqref{24.01.15.23.11}, by Lemma \ref{24.01.15.23.12}, it is enough to show that
    $$
    \sup_{0\leq t\leq T}\|u(t,\cdot)\|_{(\Lambda^{\phi_{\gamma}}(\bR^d),\Lambda^{\phi}(\bR^d))_{W^{1/p},p}}\leq N\|u\|_{\mathbf{H}_{p,w}^{\phi,{\gamma}}((0,T)\times\bR^d)}.
    $$
    Let $u\in \mathbf{H}_{p,w}^{\phi,{\gamma}}((0,T)\times\bR^d)$ satisfying
    $$
    u(t,x)=u_0(x)+\int_0^tf(s,x)\mathrm{d}s
    $$
    for all $(t,x)\in[0,T)\times\bR^d$.
    Let also $\zeta\in C^{\infty}((0,\infty))$ be a nonnegative decreasing function satisfying $\zeta(s)=1$ for $0\leq s\leq T/2$ and $\zeta(s)=0$ for $s\geq T$.
    Denote
    $$
    \bar{u}(t,x):=\zeta(t)u(t,x),\quad \bar{f}(t,x):=\zeta(t) f(t,x)+\zeta'(t)u(t,x)
    $$
    Then for $0\leq s\leq t$ and $x\in\bR^d$,
    \begin{equation}
    \label{23.12.08.14.22}
        \bar{u}(t,x)=\bar{u}(s,x)+\int_s^t\bar{f}(r,x)\mathrm{d}r.
    \end{equation}
    Taking the Laplace transform in \eqref{23.12.08.14.22} with respect to $t\geq s$,
    \begin{equation}
    \label{23.12.08.14.36}
        \bar{u}(s,x)=\lambda\cL_s[\bar{u}(\cdot,x)](\lambda)-\cL_s[\bar{f}(\cdot,x)](\lambda),
    \end{equation}
    where $\cL_s[h](\lambda):=\int_s^{\infty}\mathrm{e}^{-\lambda t}h(r)\mathrm{d}r$.
    By \eqref{23.12.08.14.36},
    $$
    K(\lambda,\bar{u}(s,\cdot);\Lambda^{\phi_{\gamma}}(\bR^d),\Lambda^{\phi}(\bR^d))\leq \lambda \cL_0[\|\bar{u}(\cdot,*)\|_{\Lambda^{\phi_{\gamma}}(\bR^d)}](\lambda)+\lambda \cL_0[\|\bar{f}(\cdot,*)\|_{\Lambda^{\phi}(\bR^d)}](\lambda).
    $$
    Then it follows that
    \begin{align*}
        &\|\bar{u}(s,\cdot)\|_{(\Lambda^{\phi_{\gamma}}(\bR^d),\Lambda^{\phi}(\bR^d))_{W^{1/p},p}}^p\\
        &=:\int_0^{\infty}W(\lambda^{-1})K(\lambda,\bar{u}(s,\cdot);\Lambda^{\phi_{\gamma}}(\bR^d),\Lambda^{\phi}(\bR^d))^p\frac{\mathrm{d}\lambda}{\lambda}\\
        &\leq N(p)\int_0^{\infty}W(\lambda^{-1})|\lambda \cL_0[\|\bar{u}(\cdot,*)\|_{\Lambda^{\phi_{\gamma}}(\bR^d)}](\lambda)|^p+|\lambda \cL_0[\|\bar{f}(\cdot,*)\|_{\Lambda^{\phi}(\bR^d)}](\lambda)|^p\frac{\mathrm{d}\lambda}{\lambda}.
    \end{align*}
By \cite[Equation (4.9)]{CLSW2023trace}, we obtain
\begin{align*}
&\|\zeta(s)u(s,\cdot)\|_{(\Lambda^{\phi_{\gamma}}(\bR^d),\Lambda^{\phi}(\bR^d))_{W^{1/p},p}}\\
&\quad=\|\bar{u}(s,\cdot)\|_{(\Lambda^{\phi_{\gamma}}(\bR^d),\Lambda^{\phi}(\bR^d))_{W^{1/p},p}}\\
&\quad\leq N\left(\|\bar{u}\|_{L_p((0,\infty),w\,\mathrm{d}t;\Lambda^{\phi_{\gamma}}(\bR^d))}+\|\bar{f}\|_{L_p((0,\infty),w\,\mathrm{d}t;\Lambda^{\phi}(\bR^d))}\right)\\
&\quad\leq N\left(\|u\|_{L_p((0,T),w\,\mathrm{d}t;\Lambda^{\phi_{\gamma}}(\bR^d))}+\|f\|_{L_p((0,T),w\,\mathrm{d}t;\Lambda^{\phi}(\bR^d))}\right)\\
&\quad\leq N\|u\|_{\mathbf{H}_{p,w}^{\phi,{\gamma}}((0,T)\times\bR^d)},
\end{align*}
where $N=N(p,[w]_{A_p(\bR)})$.
Using the similar arguments, we also have
\begin{align*}
    \|(1-\zeta(s))u(s,\cdot)\|_{(\Lambda^{\phi_{\gamma}}(\bR^d),\Lambda^{\phi}(\bR^d))_{W^{1/p},p}}\leq N\|u\|_{\mathbf{H}_{p,w}^{\phi,{\gamma}}((0,T)\times\bR^d)}.
\end{align*}

    $(ii)$ Let $u\in \mathbf{H}_{p,w}^{\phi,{\gamma}}((0,T)\times\bR^d)$ satisfying
    $$
    u(t,x)=u_0(x)+\int_0^tf(s,x)\mathrm{d}s
    $$
    for all $(t,x)\in[0,T)\times\bR^d$.
    By H\"older's inequality,
    $$
    \|u(t,\cdot)-u(s,\cdot)\|_{\Lambda^{\phi}(\bR^d)}\leq \int_s^t\|f(r,\cdot)\|_{\Lambda^{\phi}(\bR^d)}\mathrm{d}r\leq \widetilde{W}(t,s)^{1-\frac{1}{p}}\|f\|_{L_p((0,T),w\,\mathrm{d}t;\Lambda^{\phi}(\bR^d))},
    $$
    and this certainly implies that
    $$
    \sup_{0\leq s< t<T}\frac{\|u(t,\cdot)-u(s,\cdot)\|_{\Lambda^{\phi}(\bR^d)}}{\widetilde{W}(t,s)^{1-1/p}}\leq N\|u\|_{\mathbf{H}_{p,w}^{\phi,{\gamma}}((0,T)\times\bR^d)}.
    $$
    By Theorem \ref{23.12.05.09.53}-$(ii)$, we have
    \begin{align*}
        &\sup_{(t,x)\in[0,T]\times\bR^d}|u(t,x)|+\sup_{0\leq t\leq T}\sup_{x\in\bR^d,|h|\leq 1}\frac{|\mathcal{D}_h^{L_0}u(t,x)|}{\phi_{\gamma,p,w}(|h|^{-1})^{-1}}+\sup_{0\leq s< t\leq T}\sup_{x\in\bR^d}\frac{|u(t,x)-u(s,x)|}{\widetilde{W}(t,s)^{1-1/p}}\\
        &\leq N\|u\|_{\mathbf{H}_{p,w}^{\phi,{\gamma}}((0,T)\times\bR^d)}.
    \end{align*}
    This implies that
    \begin{align*}
    &\frac{|\mathcal{D}^{L_0}_{(h_1/L_0,h_2)}u(s,x)|}{\widetilde{W}(s+h_1,s)^{1-1/p}+\phi_{\gamma,p,w}(|h_2|^{-1})^{-1}}\\
    &\leq \sum_{i=1}^{L_0}\frac{|u(s+ih_1/L_0,\cdot)-u(s+(i-1)h_1/L_0,\cdot)|_{C(\bR^d)}}{\widetilde{W}(s+h_1,s)^{1-1/p}}+\frac{|\mathcal{D}^{L_0}_{h_2}u(\cdot,x)|_{C((0,T))}}{\phi_{\gamma,p,w}(|h_2|^{-1})^{-1}}\\
    &\leq  \sum_{i=1}^{L_0}\frac{|u(s+ih_1/L_0,\cdot)-u(s+(i-1)h_1/L_0,\cdot)|_{C(\bR^d)}}{\widetilde{W}(s+ih_1/L_0,s+(i-1)h_1/L_0)^{1-1/p}}+\frac{|\mathcal{D}^{L_0}_{h_2}u(\cdot,x)|_{C((0,T))}}{\phi_{\gamma,p,w}(|h_2|^{-1})^{-1}}\\
    &\leq N\|u\|_{\mathbf{H}_{p,w}^{\phi,{\gamma}}((0,T)\times\bR^d)}.
    \end{align*}
    The theorem is proved.
\end{proof}

\vspace{1cm}
\textbf{Declarations of interest.}
Declarations of interest: none

\textbf{Data Availability.}
Data sharing not applicable to this article as no datasets were generated or analysed during the current study.

\textbf{Acknowledgements.}
The author has been supported by the National Research Foundation of Korea(NRF) grant funded by the Korea government(ME) (No.RS-2023-00237315).
The author would like to thank the referees for their careful reading and useful comments.
The author also would like to thank Professor Ildoo Kim, Dr. Jin Bong Lee, and Dr. Jinsol Seo for giving many helpful comments.

\appendix

\bibliographystyle{plain}

\end{document}